\documentclass[12pt]{article}
\usepackage{amsmath,amsfonts,amssymb,amsthm}
\usepackage[british]{babel}
\usepackage{color}
\usepackage{hyperref}
\usepackage{dsfont}

\textwidth = 16cm
\textheight = 22.5cm
\oddsidemargin = 0.5cm
\topmargin =-2cm

\newcommand{\fn}{\mathfrak{n}}
\newcommand{\B}{\mathcal{B}}
\newcommand{\F}{\mathcal{F}}
\newcommand{\calF}{\mathcal{F}}
\newcommand{\calE}{\mathcal{E}}

\newcommand{\calS}{\mathcal{S}}

\newcommand{\R}{\mathbb{R}}
\newcommand{\calR}{\mathcal{R}}

\newcommand{\bbS}{\mathbb{S}}

\newcommand{\DIV}{\textnormal{div}\,}
\newcommand{\eps}{\varepsilon}
\newcommand{\bbR}{\mathbb{R}}
\newcommand{\bbI}{\mathbb{I}}
\newcommand{\lra}{\longrightarrow}


\def\longrightharpoonup{\relbar\joinrel\rightharpoonup}
\def\cv{\stackrel{w}{\longrightharpoonup}}
\def\cvwstar{\stackrel{w*}{\longrightharpoonup}}

\allowdisplaybreaks

\newtheorem{Theorem}{Theorem}
\newtheorem{Definition}{Definition}

\newtheorem{Proposition}{Proposition}
\newtheorem{Lemma}{Lemma}
\newtheorem{Remark}{Remark}

\begin{document}
	
\date{\today}
\title{Ad hoc test functions for homogenization of compressible viscous fluid with application to the obstacle problem in dimension two}
\author{Marco Bravin \footnote{Delft Institute of Applied Mathematics, Delft University of Technology, Mekelweg 4
2628 CD Delft, The Netherlands } \\ \url{M.Bravin@tudelft.nl}}

\maketitle
	
\begin{abstract}
In this paper we highlight a set of ad hoc test functions to study the homogenization of viscous compressible fluid in domains with tiny holes. This set of functions allows to improve previous results in dimensions two and three. As an application we show that the presence of a small obstacle does not influence the dynamics of a viscous compressible fluid in dimension two.  
\end{abstract}

\section{Introduction}

The study of the interaction of some tiny holes and a compressible viscous fluid has been widely studied, see for instance \cite{fei:lu}-\cite{lu:sch}-\cite{P:S}-\cite{N:P}-\cite{ION}-\cite{BO2}, where different regime where considered. In the case of homogenization with number of holes comparable to $ \eps^{-d} $ where $ d $ is the dimension, these types of results require in general a lower-bound on $ \gamma $  and $ \alpha $ which appear respectively in the pressure law $p(\rho) = \rho^{\gamma}$ and on the size $ \eps^{\alpha} $ of the holes. These limitations on $ \gamma $ and $ \alpha $ are used in two points. The first one is to show improved pressure estimates independent of $ \eps $. The second one is to verify that the limit of the solutions of the compressible Navier-Stokes equations in the domain with tiny holes satisfies the same system but in the domain without holes.  For example in \cite{lu:sch} the limitations are
\begin{equation*}   
\gamma > 6, \quad \alpha >  3  \quad \text{ and } \quad  \frac{\gamma - 6}{2\gamma - 3} \alpha > 3, 
\end{equation*}
in \cite{fei:lu} they are $ \gamma > 2 $,  $ \alpha > 3 $ and $ \alpha(\gamma-2) > 2\gamma -3 $.  Let us notice that the more severe hypothesis are used to verify that the limit solve the compressible Navier-Stokes equations, in fact in  \cite{fei:lu}-\cite{lu:sch}-\cite{P:S}-\cite{N:P}-\cite{ION}, the authors multiply the test functions for the limiting system by cut-off to make them compatible with domains with tiny holes. Let now for example consider the case of the non-stationary compressible Navier-Stokes system outside a tiny hole $ \calS_{\eps} = \eps \calS $ in dimension three. Following the strategy used in \cite{lu:sch}, to show that the limit satisfies the compressible Navier-Stokes is enough to multiply any smooth test function $ \varphi  $ by $  \eta_{\eps} $ a scaling cut-off of the type $ \eta_{\eps} = \eta(x/\eps) $ where $ \eta_{1} $ is $ 1$ in $ \calS $ and $ 0 $ outside $ 2 \calS $. In this way  $ \varphi \eta_{\eps} $ is an admissible test function for the domain with the hole $ \calS_{\eps} $ and it remains to pass to the limit in the weak formulation. The term that gives the limitation  is 
\begin{equation}
\label{pressure}       
\int \rho^{\gamma}_{\eps} \DIV( \eta_{\eps} \varphi ) = \int \rho^{\gamma}_{\eps} \eta_{\eps} \DIV(\varphi) + \int \rho^{\gamma}_{\eps} \varphi \cdot \nabla \eta_{\eps}.  
 \end{equation}
The difficult term to tackle is the second one on the right hand side. Notice that $ \|\nabla \eta_{\eps}\|_{L^p}  $ converges to $ 0 $  for $ p < 3 $. We can then conclude only if we are able to show a uniform bound for $ \rho_{\eps} $ in $ L^{q} $ with $ q  > 3\gamma/2 $. This condition together with the fact that the improved pressure estimate holds for $ \gamma + \theta \leq 5 \gamma / 3 - 1 $, we deduce the limitation 
$$ \frac{5}{3} \gamma - 1 >  \frac{3}{2}\gamma \quad \text{ if and only if  } \quad \gamma >   6. $$ 

The situation is even worst in the case of dimension two because  $ \|\nabla \eta_{\eps}\|_{L^p} \lra 0 $ only for $ p < 2 $ and the improve pressure estimate holds for $ \gamma + \theta  <  2 \gamma  - 1 $. In particular $ 2\gamma -1 > 2 \gamma  $ is false for any $ \gamma $. For this reason there are not results on the homogenisation of unsteady compressible Navier-Stokes equations in this setting when the dimension is two.  

To avoid this issue in dimension two, we introduce the ad hoc test function  
$$ \Phi_{\eps}[\varphi] = \eta_{\eps} \varphi + \nabla^{\perp} \eta_{\eps} x^{\perp} \cdot \varphi. $$
This function has much better property because if we define 
$$ \tilde{\Phi}_{\eps}^0[\varphi] = (1-\eta_{\eps}) \varphi(0) -  \nabla^{\perp} \eta_{\eps} x^{\perp} \cdot \varphi(0), $$
we have 
$$ \DIV(\tilde{\Phi}_{\eps}^0[\varphi]  ) = 0. $$ 
This allowed to rewrite 
\begin{align*}
\DIV(\Phi_{\eps}[\varphi]) - \eta_{\eps} \DIV(\varphi) =  & \, \DIV(\Phi_{\eps}[\varphi]) + \DIV(\tilde{\Phi}_{\eps}^0[\varphi]  )  - \eta_{\eps} \DIV(\varphi) \\ = & \, \nabla \eta_{\eps} (\varphi - \varphi(0))  + \nabla^{\perp}\eta_{\eps} \otimes (\varphi - \varphi(0)) : \nabla x^{\perp} + \nabla^{\perp} \eta_{\eps} \otimes x^{\perp} : \nabla \varphi.
\end{align*}
If for example $ \varphi $ is Lipschitz the terms of the type
\begin{equation*}
\nabla \eta_{\eps} (\varphi - \varphi(0))  = x \nabla \eta_{\eps} \frac{(\varphi(x) - \varphi(0))}{x}
\end{equation*}
converges to zero in any $ L^p $ with $ p < + \infty $. In particular we have that  
\begin{align*}       
\int \rho^{\gamma}_{\eps} \DIV( \Phi_{\eps}[\varphi]) = & \, \int \rho^{\gamma}_{\eps }\eta_{\eps} \DIV(\varphi) + \int \rho^{\gamma}_{\eps}  (\DIV(\Phi_{\eps}[\varphi]) - \eta_{\eps} \DIV(\varphi)) \\
& \, \lra    \int \overline{\rho^{\gamma}} \DIV(\varphi),
 \end{align*}
if we have a uniform bound of $ \rho_{\eps}  $ in $ L^ p $ for some $ p > \gamma$.

In dimension three a possible set of ad hoc test functions is
\begin{equation*}
\Phi_{\eps}[\varphi] = \eta_{\eps} \varphi +\begin{pmatrix} x_2 \partial_2 \eta_{\eps} \varphi_1 - x_2 \partial_3 \eta_{\eps}\varphi_3 \\ x_3 \partial_3 \eta_{\eps} \varphi_2 - x_2 \partial_1 \eta_{\eps} \varphi_1 \\ x_1 \partial_1 \eta_{\eps} \varphi_{3} - x_{3} \partial_2 \eta_{\eps} \varphi_2 \end{pmatrix},
\end{equation*}
with 
\begin{equation*}
\tilde{\Phi}^0_{\eps}[\varphi] = (1-\eta_{\eps}) \varphi(0) - \begin{pmatrix} x_2 \partial_2 \eta_{\eps} \varphi_1(0) - x_2 \partial_3 \eta_{\eps}\varphi_3(0) \\ x_3 \partial_3 \eta_{\eps} \varphi_2(0) - x_2 \partial_1 \eta_{\eps} \varphi_1(0) \\ x_1 \partial_1 \eta_{\eps} \varphi_{3}(0) - x_{3} \partial_2 \eta_{\eps} \varphi_2(0) \end{pmatrix} .
\end{equation*}

With the help of ad hoc test functions $ \Phi_{\eps}[\varphi] $, we can improved the hypothesis on $ \gamma $ and $ \alpha $ in the study of homogenization of compressible viscous fluid with tiny holes.  Moreover the restrictions on the parameters will not come from the pressure term, in particular we expect that \cite{lu:sch} can be shown in the case $ \gamma > 3 $ and $ \alpha > \max\{ 3, (2\gamma-3)/(\gamma-3) \}$ and these results can be extended also in the case of dimension two for appropriate lower-bounds of $ \gamma $ and $ \alpha$.

To verify that the idea introduced in this section works, we apply it to a simpler problem that is the obstacle problem in dimension two. In particular we show that the presence of a small hole does not influence the dynamic of a viscous compressible fluid in dimension two.

 Let us recall that in the case the hole is replaced by a rigid body, in \cite{FRZ}, the authors show that the small object does not influence the dynamics of a viscous compressible fluid in dimension three under the hypothesis $ \gamma > 3/ 2 $. Finally let us mention that homogenisation of compressible Navier-Stokes has been studied also in the setting of randomly perforated domains in \cite{BO}.

\section{The obstacle problem in dimension two}

At a mathematical level, we consider $ \Omega \subset \bbR^2 $ a open, bounded, connected, simply-connected subset of $ \bbR^2 $ with Lipschitz boundary such that $ 0 \in \Omega $. For a small parameter $ \eps > 0 $, we consider a sequence of small holes $ \calS_{\eps} \in B_{\eps}(0) \subset \Omega $ such that they are open, connected, simply-connected and with Lipschitz boundary. The fluid domain is $ \calF_{\eps} = \Omega \setminus \calS_{\eps} $ and to model a viscous compressible fluid in $ \calF_{\eps} $, we consider the compressible Navier-Stokes equations that reads
\begin{align}
\partial \rho_{\eps} + \DIV(\rho_{\eps} u_{\eps}) = & \, 0 \quad && \text{ for } x \in \calF_{\eps},  \nonumber \\
\partial_t (\rho_{\eps} u_{\eps}) + \DIV(\rho_{\eps} u_{\eps} \otimes u_{\eps}) - \DIV(\bbS(u_{\eps}))  -\nabla  p(\rho_{\eps}) =  & \, 0 \quad && \text{ for } x \in \calF_{\eps},  \label{CNS:equ} \\
u_{\eps} = \, &  0 \quad && \text{ for } x \in \partial \calF_{\eps}, \nonumber \\ 
\rho_{\eps}(0,.) = \rho^{in}_{\eps}, \quad (\rho_{\eps} u_{\eps})(0) = & \, q^{in} \quad && \text{ for } x \in \calF_{\eps}, \nonumber
\end{align}  
where $ u_{\eps} : \bbR^{+} \times \calF_{\eps} \lra \bbR^2 $ describes the velocity of the fluid, $ \rho_{\eps} : \bbR^{+} \times \calF_{\eps} \to \bbR^{+} $ is its density, 
\begin{equation*}
\bbS(u_{\eps}) - p(\rho_{\eps}) =  2 \mu D(u_{\eps}) + (\lambda - \mu )\DIV(u_{\eps}) \bbI - \rho_{\eps}^{\gamma} \bbI ,
\end{equation*}
is the stress tensor,  $  D(u_{\eps})$ is the symmetric gradient, in other words $ 2  D(u_{\eps}) = \nabla u_{\eps} + ( \nabla u_{\eps} )^{T}  $ and $ \bbI $ is the two dimensional identity matrix. Moreover we assume $ \mu > 0 $,  $ \lambda \geq 0 $  and $ \gamma  > 1 $. Finally $ \rho_{\eps}^{in} \geq 0 $ is the initial density and $ q^{in}_{\eps} $, such that $ q^{in}_{\eps}(x) = 0 $ for $ x \in \{y \in \Omega $ such that $ \rho^{in}_{\eps} = 0  \} $, is the initial momentum.  

The above system have been widely studied in the past years and existence of finite energy weak solutions has been proved by Lions and Feireisl see \cite{Lions2} and \cite{NS}. 

In this paper we study the limit as $ \eps $ goes to zero for solutions of \eqref{CNS:equ}, in particular we show that under some mild assumption on $ \rho^{in}_{\eps}$ and $ u^{in}_{\eps} $ solutions $ (\rho_{\eps}, u_{\eps}) $ converge in an appropriate sense to a solution $ (\rho, u)$ of the \eqref{CNS:equ} with $ \calF_{\eps} $ replaced by $ \Omega $.

\section{Definition of weak solutions and main result}

In this section we recall the definition of weak solutions for the system \eqref{CNS:equ} and then we present the main result of the paper. 

In the following we denote by $ P $ the function $ P(\rho) = \rho^{\gamma}/ (\gamma - 1 ) $. For simplicity in the next definition we do do not write the small parameter $ \eps $.

\begin{Definition}
\label{DEF:CNS}

Let $ (\rho^{in}, q^{in}) $ be an initial data such that $ P(\rho^{in}) \in L^{1}(\calF) $ and $ |q^{in}|^2/\rho^{in} \in L^{1}(\calF) $. Then a triple $ ( \rho, u) $ is a weak solution of \eqref{CNS:equ} with initial datum $ (\rho^{in}, q^{in}) $ for any $ T > 0$ if

\begin{itemize}

\item $ \rho \in L^{\infty}(0,T; L^{1}(\calF)) $ such that $ \rho \geq 0 $ and $ P(\rho) \in L^{\infty}(0,T; L^{1}(\calF)) $.

\item $ u \in L^{2}(0,T;W^{1,2}_{0}(\calF)) $.

\item $ ( \rho, u) $ satisfies the transport equation $ \partial_{t}\rho + \DIV(\rho u) =  0 $ in both a distributional sense in $ [0,T)\times \R^{3} $ and in a renormalised sense where we extend $ \rho $ and $ u $ by zero in the exterior of $ [0,T] \times \calF $.

\item the momentum equation is satisfied in the weak sense
\begin{align*}
\int_{\calF} q^{in} \varphi(0,.)  & + \int_{0}^{T} \int_{\calF} (\rho u)\cdot \partial_t \varphi  + \int_{0}^{T} \int_{\calF}[\rho u \otimes u]: D \varphi + \rho^{\gamma} \DIV \varphi = \int_{0}^{T} \int_{\calF} \bbS u : D \varphi,
\end{align*}
for any $ \varphi \in C^{\infty}([0,T)\times \calF)$.

\item for a.e. $ \tau \in [0,T] $ the following energy equality holds
\begin{align*}
  \int_{\calF} \frac{1}{2}\rho |u|^2(\tau,.) + P(\rho(\tau,.)) \, dx + \int_{0}^{\tau}\int_{\calF} \mu | \nabla u|^2+ \lambda | \DIV u |^2 \, dx dt   \leq  \int_{\calF} \frac{1}{2} \frac{|q^{in}|^2}{\rho^{in}} +  P(\rho^{in}) dx.
\end{align*}

\end{itemize}

\end{Definition} 

We can now recall the existence result of weak solutions.

\begin{Theorem}

Let $ (\rho^{in}, q^{in}) $ be an initial data such that $ P(\rho^{in}) \in L^{1}(\calF) $ and $ |q^{in}|^2/\rho^{in} \in L^{1}(\calF) $. Then there exists a solutions $ ( \rho, u) $ of \eqref{CNS:equ}  in the sense of Definition \ref{DEF:CNS} with initial datum $ (\rho^{in}, q^{in}) $ for any $ T > 0 $.

\end{Theorem}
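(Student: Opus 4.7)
The plan is to follow the by-now classical Lions--Feireisl construction of finite-energy weak solutions, adapted to the two-dimensional bounded Lipschitz domain $\calF$. Since the proof is standard and the paper references \cite{Lions2,NS}, my sketch will emphasise the architecture of the argument and flag where dimension two and the low regularity of $\partial\calF$ enter.

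First, I would regularise the system with two parameters. I would replace the continuity equation by $\partial_t\rho + \DIV(\rho u) = \eta \Delta \rho$ with homogeneous Neumann boundary condition on $\partial\calF$, and replace the pressure $\rho^\gamma$ by $\rho^\gamma + \delta \rho^\beta$ with $\beta$ large (say $\beta \geq 4$) so that all nonlinear terms are well defined and compactness is easy at this level. For fixed $\eta,\delta$, the momentum equation is solved by Faedo--Galerkin: pick a basis of $W^{1,2}_0(\calF)$ made of smooth functions, project the momentum equation onto the finite-dimensional subspace, and solve the resulting ODE system for the velocity coefficients coupled to the regularised continuity equation via a Schauder-type fixed-point argument (the parabolic continuity equation having a classical solution when $u$ is smooth). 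Standard energy bookkeeping yields an energy inequality uniform in the Galerkin dimension.

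Second, I would pass to the Galerkin limit to get a solution $(\rho_{\eta,\delta},u_{\eta,\delta})$ of the doubly regularised system; here no density oscillations occur because the parabolic regularisation forces $\rho_{\eta,\delta}$ to be compact in strong topologies. The vanishing viscosity limit $\eta\to 0$ then produces $(\rho_\delta,u_\delta)$ solving the system with augmented pressure $\rho^\gamma+\delta \rho^\beta$: the artificial pressure $\delta \rho^\beta$ with $\beta$ large gives enough integrability of $\rho_\delta$ to pass the convective term and to apply DiPerna--Lions renormalisation. The effective viscous flux identity
\begin{equation*}
\overline{(\rho_\delta^\gamma+\delta\rho_\delta^\beta)\rho_\delta} - \overline{(\rho_\delta^\gamma+\delta\rho_\delta^\beta)}\,\rho_\delta = (2\mu+\lambda)\bigl(\overline{\rho_\delta \DIV u_\delta}-\rho_\delta \DIV u_\delta\bigr)
\end{equation*}
combined with monotonicity gives strong convergence of $\rho_\delta$, so the $\eta\to 0$ limit produces a genuine weak solution of the regularised system.

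The main obstacle is the vanishing artificial pressure limit $\delta\to 0$. The only uniform bound on $\rho_\delta$ from energy is $L^\infty_tL^\gamma_x$, which is insufficient; one needs the improved Bogovskii-based pressure estimate
\begin{equation*}
\int_0^T\!\!\int_\calF \rho_\delta^{\gamma+\theta}\,dx\,dt \leq C,
\end{equation*}
with $\theta>0$, obtained by testing the momentum equation with $\nabla\Delta^{-1}(\rho_\delta^\theta - \text{mean})$; in two dimensions this gives $\theta<\gamma-1$, hence any $\gamma>1$ yields $\theta>0$. With this extra integrability one re-establishes the effective viscous flux identity and then shows that the oscillation defect measure $\mathrm{osc}_{\gamma+1}[\rho_\delta\to\rho]$ is finite on $(0,T)\times\calF$, which together with the renormalised continuity equation for the limit forces strong convergence $\rho_\delta\to\rho$ in $L^1$. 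Passing to the limit in every nonlinear term and in the energy inequality (using weak lower semicontinuity for the dissipation and Fatou for the pressure potential $P$) produces the desired weak solution in the sense of Definition~\ref{DEF:CNS}. The Lipschitz regularity of $\partial\calF_\eps$ is sufficient throughout because only $W^{1,p}$ Bogovskii-type operators and $W^{1,2}_0$ Poincaré--Korn inequalities are used.
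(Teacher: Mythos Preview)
Your sketch is correct and follows precisely the Lions--Feireisl scheme that the paper invokes: the paper does not give any argument for this existence theorem but simply refers to \cite{Lions2} and Section~7 of \cite{NS}, whose content you have accurately outlined (Faedo--Galerkin with artificial viscosity and artificial pressure, followed by the limits $\eta\to 0$ and $\delta\to 0$ via the effective viscous flux identity and, for low $\gamma$, the oscillation defect measure). There is nothing to add or correct.
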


The proof is classical see for instance \cite{Lions2} or Section 7 of \cite{NS}. We are now able to state our main result.

\begin{Theorem}
\label{main:theo}

Let $ \gamma > 2 $ and let $ (\rho^{in}_{\eps}, q^{in}_{\eps}) $ be a sequence of initial data such that $ P(\rho^{in}_{\eps}) \in L^{1}(\calF_{\eps}) $ and $ |q^{in}_{\eps}|^2/\rho^{in}_{\eps} \in L^{1}(\calF_{\eps}) $ such that there exists $ \rho^{in} \in L^{\gamma}(\Omega) $ and $ q^{in} $ such that  $ q^{in} = 0 $ in $ \{ x $ such that $ \rho^{in } = 0 \}$  and $ |q^{in}|^2/\rho^{in} \in L^{1}(\calF_{\eps}) $ for which
\begin{itemize}

\item $ \rho^{in }_{\eps}  \lra \rho^{in } $ in $ L^{\gamma}(\Omega) $,

\item $ |q^{in}_{\eps}|^2/\rho^{in}_{\eps} \lra |q^{in}|^2/\rho^{in} $ in $ L^1(\Omega) $ .

\end{itemize}
Then up to subsequence there exists $ (\rho,u) $ such that
\begin{equation}
\rho_{\eps} \lra \rho \text{ in } C_w(0,T;L^{\gamma}(\Omega)) \quad \text{ and } \quad u_{\eps} \cv u  \text{ in } L^2(0,T;W^{1,2}_0(\Omega)).
\end{equation}
Moreover $ (\rho,u) $ satisfies \eqref{CNS:equ} in $ \Omega $ and with initial data $ (\rho^{in}, q^{in}) $, in the sense of Definition \ref{DEF:CNS}.

\end{Theorem}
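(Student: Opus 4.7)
The plan is to follow the standard Lions--Feireisl programme for compressible Navier--Stokes, but to replace the naive cut-off $\eta_\eps\varphi$ by the ad hoc test function $\Phi_\eps[\varphi]$ advertised in the introduction. First I would establish the usual uniform bounds coming from the energy inequality: $\rho_\eps$ is bounded in $L^\infty(0,T;L^\gamma(\Omega))$, $\sqrt{\rho_\eps}u_\eps$ in $L^\infty(0,T;L^2)$, and $u_\eps$ in $L^2(0,T;W^{1,2}_0)$, after extending $u_\eps$ by zero across $\partial\calS_\eps$ and $\rho_\eps$ by zero on $\calS_\eps$. Then, with a Bogovski\u\i-type test function on $\calF_\eps$ (whose norm stays bounded uniformly in $\eps$ because $\calS_\eps\subset B_\eps(0)$ and $\eps\to 0$), I would prove an improved integrability $\rho_\eps\in L^{\gamma+\theta}((0,T)\times\calF_\eps)$ for some $\theta>0$; this is where $\gamma>2$ is convenient, since in dimension two the Feireisl--Lions calculation yields $\theta$ arbitrarily close to $\gamma-1$. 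From these bounds standard Aubin--Lions compactness gives $\rho_\eps\to\rho$ in $C_w(0,T;L^\gamma)$ and strongly in $L^p(L^q)$ for suitable exponents, and $u_\eps\rightharpoonup u$ weakly in $L^2(W^{1,2}_0(\Omega))$. Passing to the limit in the continuity equation is then immediate and provides $\partial_t\rho+\DIV(\rho u)=0$ on all of $(0,T)\times\Omega$ (both in $\calD'$ and in the renormalised sense).

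Next, for the momentum equation, I would fix $\varphi\in C^\infty_c([0,T)\times\Omega;\R^2)$ and use $\Phi_\eps[\varphi]$ as test function. Because $\Phi_\eps[\varphi]$ vanishes on $\calS_\eps$, it is admissible, and all of $\Phi_\eps[\varphi]-\varphi$, $\partial_t\Phi_\eps[\varphi]-\partial_t\varphi$ and $\nabla\Phi_\eps[\varphi]-\nabla\varphi$ are supported in $2\calS_\eps\setminus\calS_\eps$ with a uniform $L^\infty$ bound. Consequently they converge to $0$ in $L^p$ for every $p<\infty$, so the inertial, time-derivative and viscous terms can be dealt with by routine weak/strong-pairing arguments using the bounds above. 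The delicate term is the pressure one, which I would split as
\begin{equation*}
\int\rho_\eps^\gamma\DIV(\Phi_\eps[\varphi])=\int\rho_\eps^\gamma\eta_\eps\DIV\varphi+\int\rho_\eps^\gamma\bigl(\DIV(\Phi_\eps[\varphi])-\eta_\eps\DIV\varphi\bigr).
\end{equation*}
The identity displayed in the introduction shows that the second integrand is pointwise bounded and supported on a set of measure $\O(\eps^2)$, so the improved pressure estimate $\rho_\eps\in L^{\gamma+\theta}$ kills it in the limit; the first integral converges to $\int\overline{\rho^\gamma}\DIV\varphi$ because $\eta_\eps\to 1$ in every $L^p_{\mathrm{loc}}$ with $p<\infty$ and $\rho_\eps^\gamma\rightharpoonup\overline{\rho^\gamma}$ weakly.

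The main obstacle, as always in this framework, is the identification $\overline{\rho^\gamma}=\rho^\gamma$, i.e.\ the strong convergence of $\rho_\eps$. I would reproduce Feireisl's effective viscous flux argument: test \eqref{CNS:equ} with a function of the form $\Phi_\eps[\psi\,\calB[\rho_\eps^\theta-\langle\rho_\eps^\theta\rangle]]$, where $\calB$ is a Bogovski\u\i\ operator on $\Omega$ and $\psi$ a space-time localiser, and exploit the commutator-type vanishing $\DIV(\Phi_\eps[\cdot])-\eta_\eps\DIV(\cdot)\to 0$ proved above. This yields the standard identity $\overline{p(\rho)\DIV u}-(2\mu+\lambda)\overline{\rho\,\DIV u}=\overline{p(\rho)}\,\DIV u-(2\mu+\lambda)\rho\,\DIV u$ now on $\Omega$. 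Combined with the renormalised continuity equation (available because $\gamma>2$ makes $\rho\in L^2$) and the monotonicity of $z\mapsto z^\gamma$, this gives vanishing of the oscillation defect measure and hence $\rho_\eps\to\rho$ strongly in $L^1((0,T)\times\Omega)$, completing the proof. The novelty lives entirely in controlling the pressure contribution across the hole via $\Phi_\eps$; the rest of the Lions--Feireisl machinery then runs unchanged.
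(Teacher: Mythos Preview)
Your global outline --- uniform energy bounds, improved pressure integrability, ad hoc test function $\Phi_\eps[\varphi]$, effective viscous flux --- matches the paper's, but there is a concrete error in your treatment of the viscous and convective terms. You claim that $\nabla\Phi_\eps[\varphi]-\nabla\varphi$ is supported in an annulus of width $\eps$ with a \emph{uniform $L^\infty$ bound}. This is false: $\nabla\Phi_\eps[\varphi]$ contains terms such as $\nabla\eta_\eps\otimes\varphi$ and $(x^\perp\cdot\varphi)\,\nabla\nabla^\perp\eta_\eps$, both pointwise of order $1/\eps$ when $\eta_\eps=\eta(\cdot/\eps)$. The cancellation you correctly invoke for the \emph{divergence}, namely $\DIV\Phi_\eps-\eta_\eps\DIV\varphi=O(1)$, comes from $\DIV\tilde\Phi^0_\eps=0$ and does not carry over to the full symmetric gradient. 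Worse, in dimension two $\|\nabla\eta_\eps\|_{L^2(\R^2)}$ is a nonzero constant independent of $\eps$, so $\|D\Phi_\eps[\varphi]-D\varphi\|_{L^2}$ does not vanish either, and the pairings $\int\bbS u_\eps:D\Phi_\eps[\varphi]$ and $\int\rho_\eps u_\eps\otimes u_\eps:D\Phi_\eps[\varphi]$ cannot be discarded as ``routine''.

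The paper fixes this by replacing the naive rescaled cut-off with a logarithmic one $\eta_{\eps,\alpha_\eps}$ supported on the \emph{larger} ball $B_{\eps\alpha_\eps}$, with $\alpha_\eps\to\infty$ but $\alpha_\eps\le|\log\eps|$ (Proposition~\ref{prop:cut:off}). For this family one has
\[
\|\nabla\eta_{\eps,\alpha_\eps}\|_{L^2}^2+\||x|\,\nabla^2\eta_{\eps,\alpha_\eps}\|_{L^2}^2\ \le\ \frac{C}{\log\alpha_\eps}\ \longrightarrow\ 0,
\]
which is exactly what is needed to obtain $\|D\Phi_\eps[\varphi]-\fn_\eps D\varphi\|_{L^p}\to 0$ for $p\le 2$ (Lemma~\ref{lem:est:tf}) and hence to pass to the limit in the gradient terms. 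A related point you gloss over: the improved pressure bound is proved only on $\Omega\setminus B_{2\eps}(0)$ (the Bogovski\u\i\ test function must itself avoid the hole), so $\Phi_\eps[\varphi]$ is built to vanish on $B_{2\eps}(0)$, and your sentence ``$\rho_\eps\in L^{\gamma+\theta}$ kills it'' requires this compatibility. Once the logarithmic cut-off is in place and the $L^p$ exponents are tracked carefully, the remainder of your plan (including the effective viscous flux identification) does go through as in Lions--Feireisl.
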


Let us explain where we use the condition $ \gamma > 2 $.

\begin{Remark}
Although the existence of weak solutions to \eqref{CNS:equ} holds for $ \gamma > 1 $, in Theorem \ref{main:theo} we consider the case $ \gamma > 2 $. This restriction come from the fact that in dimension two to pass to the limit in the term
\begin{equation*}
\int_0^T \int_{\calF_{\eps}} \rho_{\eps}u_{\eps}\otimes u_{\eps} : D\Phi_{\eps}[\varphi],
\end{equation*}
we use that $ \rho_{\eps}$, $ u_{\eps} $ and $ D\Phi_{\eps}[\varphi] $  are uniformly bounded respectively in $ L^{\infty}(0,T;L^{\gamma}(\calF_{\eps})) $, in $ L^{2}(0,T;L^p(\calF_{\eps})) $ for any $ p   < +  \infty $ and $ L^{\infty}(0,T;L^{2}(\calF_{\eps})) $,  together with the condition 
\begin{equation*}
\frac{1}{\gamma}+ \frac{1}{\infty} + \frac{1}{\infty} + \frac{1}{2} \leq 1 \quad \text{ if and only if } \quad \gamma > 2.
\end{equation*}

\end{Remark}

In the remaining part of the paper we show Theorem \ref{main:theo}.


\section{A priori estimates}
Let recall that by definition of weak solution to the system \eqref{CNS:equ}, any solution $ (\rho_{\eps}, u_{\eps}) $  satisfies the inequalities 
\begin{align*}
\|\rho_{\eps}(t,.)\|_{L^{\gamma}(\calF_{\eps})} =  \|\rho^{in}_{\eps}\|_{L^{\gamma}(\calF_{\eps})}
\end{align*}
and
\begin{align*}
  \int_{\calF_{\eps}} \frac{1}{2}\rho_{\eps} |u_{\eps}|^2(\tau,.) + P(\rho_{\eps}(\tau,.)) \, dx + \int_{0}^{\tau}\int_{\calF_{\eps}} \mu | \nabla u_{\eps}|^2+ & \lambda | \DIV u_{\eps} |^2 \, dx dt  \\ &  \leq  \int_{\calF} \frac{1}{2} \frac{|q^{in}_{\eps}|^2}{\rho^{in}_{\eps}} +  P(\rho^{in}_{\eps}) dx.
\end{align*}
By the hypothesis of Theorem \ref{main:theo}, the right hand side of the above inequalities are uniformly bounded in $ \eps $. In particular we deduce that
\begin{align}
\|\rho_{\eps}\|_{L^{\infty}(0,T;L^{\gamma}(\Omega))} \leq & C, \nonumber \\
\|\sqrt{\rho_{\eps}} u_{\eps} \|_{L^{\infty}(0,T;L^{2}(\Omega))} \leq & C,  \label{a:priori:est}\\
\|u_{\eps}\|_{L^{2}(0,T; W^{1,2}(\Omega))} \leq & C.\nonumber
\end{align}
Moreover we can show the following improved pressure estimates 
\begin{Lemma}
\label{press:est:lem}
Let $ \gamma > 2 $, under the hypothesis of Theorem \ref{main:theo}, for any $ \theta < \gamma -1 $ it holds
\begin{equation*}
\int_{0}^T \int_{\Omega \setminus B_{2 \eps}(0)} \rho^{\gamma + \theta }_{\eps} \leq  C, 
\end{equation*}
where $ C $ is independent of $ \eps $. 

\end{Lemma}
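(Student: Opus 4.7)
The plan is to adapt the classical Bogovskii test-function scheme for improved pressure bounds with an $\eps$-dependent localisation away from the hole. Fix $\theta \in (0,\gamma-1)$ and a scaled cutoff $\eta_\eps(x) = \eta(x/\eps)$, where $\eta \equiv 0$ on $B_1(0)$ and $\eta \equiv 1$ outside $B_2(0)$, so that $\eta_\eps$ vanishes in a neighbourhood of $\calS_\eps$ and equals $1$ on $\Omega \setminus B_{2\eps}(0)$. Set
$$g_\eps(t,x) = \eta_\eps(x)\,\rho_\eps^\theta(t,x) - \frac{1}{|\calF_\eps|}\int_{\calF_\eps} \eta_\eps \rho_\eps^\theta \,dy;$$
by construction $g_\eps$ has zero spatial mean on $\calF_\eps$, and by \eqref{a:priori:est} with $\theta < \gamma-1 < \gamma$, the family $\{g_\eps\}$ is uniformly bounded in $L^\infty(0,T;L^{\gamma/\theta}(\calF_\eps))$.

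The central step is to build a test field $\Phi_\eps(t,\cdot)\in W^{1,p}_0(\calF_\eps;\bbR^2)$ with $\DIV \Phi_\eps = g_\eps$ and $\|\Phi_\eps\|_{L^\infty(0,T;W^{1,p})} \le C$ for $p = \gamma/\theta$. I would first apply the standard Bogovskii operator on $\Omega$ to the zero-extension of $g_\eps$, obtaining $\widetilde\Phi_\eps$ vanishing on $\partial\Omega$, and then subtract a local correction supported in the annulus $B_{2\eps}(0)\setminus \calS_\eps$ built from the divergence-free profile $\tilde\Phi^0_\eps$ of the introduction, so as to cancel the trace of $\widetilde\Phi_\eps$ on $\partial \calS_\eps$ without disturbing the divergence. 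Testing the weak momentum equation with $\Phi_\eps$ (cut off in time to make the manipulation legitimate), the pressure term splits as
$$\int_0^T\!\!\int_{\calF_\eps} \rho_\eps^\gamma\, \DIV \Phi_\eps\, dx\, dt = \int_0^T\!\!\int_{\calF_\eps} \eta_\eps\, \rho_\eps^{\gamma+\theta}\, dx\, dt - \int_0^T c_\eps(t) \int_{\calF_\eps} \rho_\eps^\gamma\, dx\, dt,$$
where $c_\eps(t)$ is the subtracted mean; the second piece is uniformly bounded by \eqref{a:priori:est}.

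The remaining terms of the weak formulation are handled in a standard way. The time derivative $\int (\rho_\eps u_\eps)\cdot \partial_t \Phi_\eps$ is controlled by using the renormalised continuity equation to compute $\partial_t g_\eps$ and then invoking the same divergence-lift machinery to produce $\partial_t \Phi_\eps$; the convective term $\int \rho_\eps u_\eps \otimes u_\eps : \nabla \Phi_\eps$ and the viscous term $\int \bbS(u_\eps):\nabla \Phi_\eps$ are bounded by Hölder, using \eqref{a:priori:est} and the uniform $W^{1,p}$ bound on $\Phi_\eps$; the initial-data contribution is finite by hypothesis. Since $\eta_\eps \equiv 1$ on $\Omega\setminus B_{2\eps}(0)$, collecting these estimates yields the claim.

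The main obstacle is the uniform construction of $\Phi_\eps$, because the standard Bogovskii operator on the perforated domain $\calF_\eps$ degenerates in dimension two as $\eps \to 0$. The hypotheses $\gamma > 2$ and $\theta < \gamma - 1$ are precisely what force $p = \gamma/\theta$ to lie in the subcritical range $(1,2)$, which is exactly where the ad hoc correction of the introduction can absorb the trace on $\partial \calS_\eps$ uniformly in $\eps$.
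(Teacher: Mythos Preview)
Your overall scheme --- localise with a cutoff vanishing near the hole, lift $g_\eps = \eta_\eps\rho_\eps^\theta - \langle\cdot\rangle$ via a Bogovski\u\i\ operator, test the momentum equation, and handle $\partial_t\Phi_\eps$ through the renormalised continuity equation --- is exactly what the paper does. But two points in your write-up are not right.

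First, the correction step. The divergence-free profile $\tilde\Phi^0_\eps[\varphi]$ from the introduction depends only on the single vector $\varphi(0)$, so it can only cancel a \emph{constant} trace on $\partial\calS_\eps$. The Bogovski\u\i\ output $\widetilde\Phi_\eps$ has no reason to be constant there, so subtracting something ``built from $\tilde\Phi^0_\eps$'' will not kill its trace. If instead you try to use the full map $\Phi_\eps[\cdot]$ applied to $\widetilde\Phi_\eps$, you do get the right boundary behaviour, but you spoil the divergence: $\DIV\Phi_\eps[\widetilde\Phi_\eps]\ne g_\eps$, and the discrepancy is paired against $\rho_\eps^\gamma$, for which you have no a~priori integrability beyond $L^1$ in time --- so it cannot be absorbed. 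The paper's fix is different and more classical: it builds a uniform Bogovski\u\i\ operator $\B_\eps$ on $\calF_\eps$ as $\calR_\eps\circ\B_\Omega\circ\calE_\eps$, where the restriction $\calR_\eps$ uses a \emph{local Bogovski\u\i\ on the annulus} $B_{2\eps}\setminus B_\eps$ (following Lu--Schwarzacher), not the ad~hoc test functions. That restriction operator is uniformly bounded $W^{1,p}\to W^{1,p}$ precisely for $1<p\le 2$, so the perforated-domain Bogovski\u\i\ does \emph{not} degenerate in that range; your diagnosis that it does is off.

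Second, the range claim. The hypotheses $\gamma>2$ and $\theta<\gamma-1$ do \emph{not} force $\gamma/\theta\in(1,2)$: take $\gamma=3$, $\theta=1$. What you actually need is $\theta>\gamma/2$, which together with $\theta<\gamma-1$ is a nonempty interval exactly when $\gamma>2$; the full range $\theta<\gamma-1$ then follows by interpolation with the energy bound $\rho_\eps\in L^\infty_tL^\gamma_x$. The paper makes this restriction explicit (and treats the borderline $\theta=\gamma/2$ by a separate estimate).
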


The proof of this estimates is classical so let us postpone the proof in the Appendix \ref{press:est:app}

\section{Some appropriate cut-off}

In this section we introduce some cut-off that has been considered also in \cite{He:2D} and \cite{ION}. These cut-off have the property that they optimized the $ L^{2} $ norm of the gradient and we denote them by $ \eta_{\eps, \alpha_\eps} $. The parameter $ \eps > 0 $ indicates that $ \eta_{\eps, \alpha_\eps} = 1 $ in the ball $ B_{\eps}(0) $ and $ \alpha_{\eps}  $ that the support of the  $ \eta_{\eps, \alpha_\eps} $ is contained in the ball of size $ \eps \alpha_{\eps } $. 

\begin{Proposition}
\label{prop:cut:off}
For any $ \eps > 0 $ and $ \alpha_{\eps} \geq 2 $, there exists a cut-off function $\eta_{\eps, \alpha_{\eps}}  \in C^{\infty}_c(B_{\eps \alpha_{\eps}}(0)) $ such that $ \eta_{\eps, \alpha_{\eps}}(x) = 1 $ for $ x \in B_{\eps}(0) $ ,  $ \| \eta_{\eps, \alpha_{\eps}} \|_{L^{\infty}} \leq 1 $ and the following bounds hold with constant $ C $ independent of $ \eps $ and $ \alpha_{\eps} $.
\begin{enumerate}

\item For  $ 1\leq q < +\infty $ $$\| \eta_{\eps, \alpha} \|_{L^{q}(\R^2)} + \| |x| \nabla \eta_{\eps, \alpha} \|_{L^q(\bbR^2)} \leq C(\eps \alpha_{\eps})^{2/q}  .$$

\item We have $$ \left\| \nabla \eta_{\eps, \alpha_{\eps}} \right\|_{L^{2}(\R^2)}^2  +  \left\| |x| \nabla^2 \eta_{\eps, \alpha_{\eps}} \right\|_{L^{2}(\R^2)}^2 \leq  \frac{C}{(\log \alpha_{\eps}) }. $$

\item For $ 1\leq q < 2 $, $$ \left\| \nabla \eta_{\eps, \alpha_{\eps}} \right\|_{L^{2}(\R^2)}^q + \left\| |x| \nabla^2 \eta_{\eps, \alpha_{\eps}} \right\|_{L^{q}(\R^2)}^2 \leq  \frac{C}{2-q}  \frac{(\eps \alpha_{\eps})^{2-q}}{(\log \alpha_{\eps})^{q} } . $$

\item For  $  2 <  q < +\infty $, , for $ i = 1, 2 $, $$ \left\| \nabla \eta_{\eps, \alpha_{\eps}} \right\|_{L^{q}(\R^2)}^q +  \left\| \nabla^2 \eta_{\eps, \alpha_{\eps}} x_i \right\|_{L^{q}(\R^2)}^q = \frac{C}{q-2}  \frac{\eps^{2-q}}{(\log \alpha_{\eps})^{q} }. $$

In particular if  $  \alpha_{\eps} \leq |\log(\eps)| $ and $ \alpha_{\eps} \lra +\infty $,
 $$ \left\| \nabla \eta_{\eps, \alpha_{\eps}} \right\|_{L^{q}(\R^2)} +  \left\| |x| \nabla^2 \eta_{\eps, \alpha_{\eps}} \right\|_{L^{2}(\R^2)} \lra 0 \quad \text{ for } 1 \leq q \leq 2 $$
and 
$$  \eps \alpha_{\eps} \left\| \nabla \eta_{\eps, \alpha_{\eps}} \right\|_{L^{q}(\R^2)}, \eps \alpha_{\eps} \left\| \nabla^2 \eta_{\eps, \alpha_{\eps}} x_i \right\|_{L^{q}(\R^2)} \lra 0 \quad \text{ for }  2 <  q < +\infty.   $$

\end{enumerate}

\end{Proposition}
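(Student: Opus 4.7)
My plan is to construct $\eta_{\eps,\alpha_\eps}$ as a smooth regularization of the classical two-dimensional logarithmic cut-off. Specifically, I would start from the radial function
\begin{equation*}
\bar{\eta}_{\eps,\alpha_\eps}(x) = \begin{cases} 1 & \text{if } |x| \leq \eps, \\[2pt] \dfrac{\log(\eps \alpha_\eps) - \log|x|}{\log \alpha_\eps} & \text{if } \eps \leq |x| \leq \eps \alpha_\eps, \\[2pt] 0 & \text{if } |x| \geq \eps\alpha_\eps, \end{cases}
\end{equation*}
which is the harmonic capacity minimizer for the $L^2$-norm of the gradient in the annulus. Since $\bar{\eta}_{\eps,\alpha_\eps}$ is merely Lipschitz (with two corners at $|x|=\eps$ and $|x|=\eps\alpha_\eps$), I would obtain $\eta_{\eps,\alpha_\eps}\in C^\infty_c$ by convolving with a radial mollifier of width $\delta \ll \eps$ and then rescaling slightly (say, composing with a smooth reparametrization on $[\eps,\eps\alpha_\eps]$ that agrees with the linear-in-$\log|x|$ profile away from thin neighborhoods of the endpoints). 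Because the mollification affects only a thin annulus and because $\bar\eta$ is already smooth in the interior of the annulus, all relevant norms agree up to lower-order terms with those of $\bar\eta$.

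Next I would compute every norm in polar coordinates using $|\nabla \bar{\eta}| = \frac{1}{|x|\log\alpha_\eps}\mathbf{1}_{\eps\leq|x|\leq\eps\alpha_\eps}$ and $|\nabla^2\bar\eta|\lesssim \frac{1}{|x|^2 \log\alpha_\eps}$ in the same annulus. For item (1), the $L^q$ bound on $\eta$ follows from $\eta\le\mathbf{1}_{B_{\eps\alpha_\eps}}$, and
\begin{equation*}
\||x|\nabla\bar\eta\|_{L^q}^q \lesssim \frac{1}{(\log\alpha_\eps)^q} \int_\eps^{\eps\alpha_\eps}\!\! r\,dr \lesssim \frac{(\eps\alpha_\eps)^2}{(\log\alpha_\eps)^q},
\end{equation*}
which is bounded by $C(\eps\alpha_\eps)^{2/q}$ since $\alpha_\eps\geq 2$. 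For item (2) the standard computation
\begin{equation*}
\|\nabla\bar\eta\|_{L^2}^2 = \frac{2\pi}{(\log\alpha_\eps)^2}\int_\eps^{\eps\alpha_\eps}\frac{dr}{r} = \frac{2\pi}{\log\alpha_\eps}
\end{equation*}
yields the logarithmic gain, and the analogous bound for $\||x|\nabla^2\bar\eta\|_{L^2}$ is identical. Items (3) and (4) follow by direct evaluation of $\int_\eps^{\eps\alpha_\eps} r^{1-q}dr$: for $q<2$ the antiderivative $\frac{r^{2-q}}{2-q}$ is dominated by its upper limit $\frac{(\eps\alpha_\eps)^{2-q}}{2-q}$, whereas for $q>2$ the antiderivative $\frac{-r^{2-q}}{q-2}$ is dominated by its lower limit $\frac{\eps^{2-q}}{q-2}$, giving exactly the two claimed bounds.

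Finally, the asymptotic statements under $\alpha_\eps\leq|\log\eps|$ and $\alpha_\eps\to+\infty$ are immediate consequences: for $1\le q\le 2$ the bounds from (2) and (3) are $O\bigl((\log\alpha_\eps)^{-1/q}\bigr) \to 0$ up to a factor $(\eps\alpha_\eps)^{(2-q)/q}$ which stays bounded since $\eps\alpha_\eps\to 0$; and for $q>2$,
\begin{equation*}
\eps\alpha_\eps\|\nabla\bar\eta\|_{L^q} \lesssim \frac{\eps\alpha_\eps \cdot \eps^{(2-q)/q}}{(q-2)^{1/q}\log\alpha_\eps} = \frac{\alpha_\eps \eps^{2/q}}{(q-2)^{1/q}\log\alpha_\eps} \lesssim \frac{|\log\eps|\,\eps^{2/q}}{\log|\log\eps|} \to 0,
\end{equation*}
and similarly for the Hessian term. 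I do not anticipate a conceptual obstacle; the main care is bookkeeping the elementary integrals and verifying that the mollification step does not spoil any of the sharp logarithmic constants, which is standard since the mollifier has compact support in a region where $\bar\eta$ is either smooth or locally linear in $\log|x|$.
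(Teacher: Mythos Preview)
Your approach is the same as the paper's: start from the logarithmic profile $\bar\eta$, evaluate the elementary radial integrals $\int_\eps^{\eps\alpha_\eps} r^{1-q}\,dr$, and then smooth the two corners. The integral computations and the asymptotic conclusions are correct (one harmless slip: in the final display you replace $\log\alpha_\eps$ by $\log|\log\eps|$ in the denominator, but $\alpha_\eps\le|\log\eps|$ gives that inequality the wrong way; the conclusion still holds because $\alpha_\eps\eps^{2/q}\le |\log\eps|\,\eps^{2/q}\to 0$ already, and $\log\alpha_\eps\ge\log 2$).

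The one point where your proposal is looser than the paper is the smoothing step. A single mollifier of fixed width $\delta\ll\eps$ does \emph{not} keep the Hessian bound at the outer corner: the jump in $\partial_r\bar\eta$ at $r=\eps\alpha_\eps$ has size $(\eps\alpha_\eps\log\alpha_\eps)^{-1}$, so after mollification the contribution to $\||x|\nabla^2\eta\|_{L^2}^2$ from that transition layer is of order $\eps\alpha_\eps/(\delta(\log\alpha_\eps)^2)$, which is $\le C/\log\alpha_\eps$ only if $\delta\gtrsim \eps\alpha_\eps/\log\alpha_\eps$ --- incompatible with $\delta<\eps$ once $\alpha_\eps$ is large. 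The paper avoids this by smoothing with cut-offs at \emph{two different scales}: it multiplies by $1-g(|x|/\eps)$ near the inner corner and by $g(c|x|/(\eps\alpha_\eps))$ near the outer one, and uses that $|\tilde\eta-1|$ (resp.\ $|\tilde\eta|$) is $O(1/\log\alpha_\eps)$ on those respective annuli, so the correction terms inherit the logarithmic gain. Your parenthetical ``reparametrization on $[\eps,\eps\alpha_\eps]$'' would work for the same reason, provided the ``thin neighborhoods'' are taken proportional to the local radius (width $\sim\eps$ at the inner endpoint, width $\sim\eps\alpha_\eps$ at the outer one), which is the detail you should make explicit.
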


The proof of the above proposition is a straight-forward extension of Lemma 3 of  \cite{He:2D}, so let us postpone the proof in Appendix \ref{app:Jiao}.

Under the assumption $  \alpha_{\eps} \leq |\log(\eps)| $ and $ \alpha_{\eps} \lra +\infty $, we denote $ 1 - \eta_{2\eps, \alpha_{2\eps}} = \mathfrak{n}_{\eps}$.

Let now present another useful estimate. 
For a function $ \varphi \in L^{1}(\Omega)$, denote by
\begin{equation*}
\Phi_{\eps}[\varphi] =   \fn_{\eps} \varphi + \nabla^{\perp} \fn_{\eps} x^{\perp} \cdot \varphi
\end{equation*}
and by
\begin{equation*}
\Phi_{\eps}^0[\varphi] =   (1-\fn_{\eps}) \langle \varphi \rangle_{\eps}(0) - \nabla^{\perp} \fn_{\eps} x^{\perp} \cdot \langle \varphi \rangle_{\eps}(0) 
\end{equation*}
where
\begin{equation*}
\langle \varphi \rangle_{\eps}(0) = \frac{1}{|B_{\eps \alpha_{\eps}}(0)|}\int_{|B_{\eps \alpha_{\eps}}(0)|} \varphi.
\end{equation*}
The following holds. 

\begin{Lemma}
\label{lem:est:tf}
Let $ p, q \in [1,+\infty]$.Then there exist a constants $ c_{p,q}(\eps) $ such that $ c_{p,q}(\eps) \lra 0 $ as $ \eps \lra 0 $  such that for any vector field $ \varphi : \Omega \lra \bbR^2 $, it holds 
\begin{equation*}
\|  \Phi_{\eps}[\varphi] - \varphi \|_{L^{p}(\Omega)} \leq  c_{p,q}(\eps) \|\varphi \|_{L^{q}(\Omega)} \quad \text{and } \quad  \|  \Phi_{\eps}[\varphi] - \fn_{\eps} \varphi \|_{L^{p}(\Omega)} \leq  c_{p,q}(\eps) \|\varphi \|_{L^{q}(\Omega)}
\end{equation*}
for $ p < q < \infty $.

\begin{equation*}
\|  \nabla \Phi_{\eps}[\varphi] - \fn_{\eps} \nabla \varphi  \|_{L^{p}(\Omega)} \leq  c_{p,q}(\eps) \|\varphi \|_{W^{1,q}(\Omega) }  
\end{equation*} 
for $ p \leq 2 $ and $ q > 2 $.
Finally 
\begin{equation}
\label{div:est:phi:0}
\|\DIV(\Phi_{\eps}[\varphi]) - \fn_{\eps} \DIV(\varphi)\|_{L^{p}(\Omega)} \leq  c_{p,q}(\eps) \|\varphi\|_{L^{q}(\Omega)}
\end{equation}
for $ p < q < \infty $.

\end{Lemma}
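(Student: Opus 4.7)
The strategy is a routine expansion plus H\"older's inequality, with the smallness absorbed into the cut-off norms of Proposition~\ref{prop:cut:off}. The algebraic kernel is the two-dimensional identity $\DIV(\Phi_\eps^0[\varphi]) = 0$, which is precisely what makes the correction $\nabla^\perp\fn_\eps\,(x^\perp\cdot\varphi)$ in the definition of $\Phi_\eps$ pay off in the divergence estimate.

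For the two $L^p$-bounds of the first statement one uses $\Phi_\eps[\varphi] - \fn_\eps\varphi = \nabla^\perp\fn_\eps\,(x^\perp\cdot\varphi)$, pointwise dominated by $|x|\,|\nabla\fn_\eps|\,|\varphi|$. H\"older with $\tfrac{1}{p} = \tfrac{1}{r} + \tfrac{1}{q}$ gives an $L^p$-bound of $\||x|\nabla\fn_\eps\|_{L^r(\R^2)}\|\varphi\|_{L^q(\Omega)}$, and item~(1) of Proposition~\ref{prop:cut:off} yields $\||x|\nabla\fn_\eps\|_{L^r}\leq C(\eps\alpha_\eps)^{2/r}\to 0$. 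The variant with $\Phi_\eps[\varphi] - \varphi$ follows by adding $\|(1-\fn_\eps)\varphi\|_{L^p}\leq\|\eta_{2\eps,\alpha_{2\eps}}\|_{L^r}\|\varphi\|_{L^q}$, handled identically. For the gradient estimate, Leibniz decomposes $\nabla\Phi_\eps[\varphi] - \fn_\eps\nabla\varphi$ into pieces of the schematic types $(\nabla\fn_\eps)\otimes\varphi$, $\nabla^\perp\fn_\eps\otimes\nabla(x^\perp\cdot\varphi)$, and $(|x|\nabla^2\fn_\eps)\otimes\varphi$, pairing with $\varphi$ or $\nabla\varphi$ in $L^q$; for $p\leq 2<q$ the H\"older conjugate $r$ defined by $\tfrac{1}{r} = \tfrac{1}{p} - \tfrac{1}{q}$ lies in $[1,2)$, and items~(2)--(3) of Proposition~\ref{prop:cut:off} supply $\|\nabla\fn_\eps\|_{L^r} + \||x|\nabla^2\fn_\eps\|_{L^r}\to 0$, closing every term.

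For the divergence estimate I first verify $\DIV(\Phi_\eps^0[\varphi])=0$. Writing $c := \langle\varphi\rangle_\eps(0)\in\R^2$, one has $\DIV[(1-\fn_\eps)c] = -\nabla\fn_\eps\cdot c$, while, since $\nabla^\perp\fn_\eps$ is divergence-free, $\nabla(x^\perp\cdot c) = -c^\perp$, and $c^\perp\cdot b^\perp = c\cdot b$ in two dimensions,
\[\DIV\bigl[-\nabla^\perp\fn_\eps(x^\perp\cdot c)\bigr] = -\nabla(x^\perp\cdot c)\cdot\nabla^\perp\fn_\eps = c^\perp\cdot\nabla^\perp\fn_\eps = c\cdot\nabla\fn_\eps,\]
so the two pieces cancel. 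Using this and the trivial $\DIV\Phi_\eps[\varphi] = \DIV(\Phi_\eps[\varphi] + \Phi_\eps^0[\varphi])$ together with
\[\Phi_\eps[\varphi] + \Phi_\eps^0[\varphi] = \langle\varphi\rangle_\eps(0) + \fn_\eps\tilde\varphi + \nabla^\perp\fn_\eps\,(x^\perp\cdot\tilde\varphi),\qquad \tilde\varphi := \varphi - \langle\varphi\rangle_\eps(0),\]
I expand the divergence and apply $\nabla(x^\perp\cdot\tilde\varphi) = -\tilde\varphi^\perp + (\nabla\varphi)^T x^\perp$ together with $a^\perp\cdot b^\perp = a\cdot b$; the $\nabla\fn_\eps\cdot\tilde\varphi$ contributions cancel, leaving a residue which, after a further integration by parts transferring the derivative from $\varphi$ onto the product $\nabla^\perp\fn_\eps\,x^\perp$, can be written as an explicit combination of $\nabla\fn_\eps\cdot\varphi$ and the divergence of $(x^\perp\cdot\varphi)\nabla^\perp\fn_\eps$. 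H\"older with items~(1) and~(3) of Proposition~\ref{prop:cut:off}, combined with the trivial bound $\|\tilde\varphi\|_{L^q(\Omega)}\leq 2\|\varphi\|_{L^q(\Omega)}$, then delivers (\ref{div:est:phi:0}).

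The main obstacle is the identity $\DIV(\Phi_\eps^0[\varphi])=0$ together with the careful tracking of the cancellations that prevents $\DIV\varphi$ from surviving on the right-hand side of (\ref{div:est:phi:0}); this is the two-dimensional algebraic feature that makes the ad hoc test function $\Phi_\eps$ gain over the naive truncation $\fn_\eps\varphi$ used in previous work.
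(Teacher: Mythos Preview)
Your treatment of the first three inequalities is correct and matches the paper's approach: expand, H\"older, and feed the cut-off factors into Proposition~\ref{prop:cut:off}. Your verification of $\DIV(\Phi_\eps^0[\varphi])=0$ by direct computation is fine (the paper observes instead that $\Phi_\eps^0[\varphi]=\nabla^\perp\bigl((1-\fn_\eps)x^\perp\cdot\langle\varphi\rangle_\eps(0)\bigr)$, which is quicker but equivalent). Your observation that the two $\nabla\fn_\eps\cdot\tilde\varphi$ contributions cancel is correct and is in fact slightly cleaner than what the paper does: the paper keeps the three terms
\[
\nabla\fn_\eps\cdot\tilde\varphi,\qquad \nabla^\perp\fn_\eps\otimes x^\perp:\nabla\varphi,\qquad \nabla^\perp\fn_\eps\otimes\tilde\varphi:\nabla x^\perp
\]
separate and handles the first and third by the Poincar\'e inequality $\|\tilde\varphi\|_{L^q(B_{2\eps\alpha_{2\eps}})}\leq C\eps\alpha_\eps\|\nabla\varphi\|_{L^q}$, not noticing that they cancel identically.

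However, there is a genuine gap in your final step for \eqref{div:est:phi:0}. After the cancellation the residue is (up to index conventions) $\nabla^\perp\fn_\eps\cdot(\nabla\varphi)^{T}x^\perp$, which pointwise involves $\nabla\varphi$. You claim to remove this derivative by ``a further integration by parts transferring the derivative from $\varphi$ onto $\nabla^\perp\fn_\eps\,x^\perp$'', but there is no integral here: you are estimating the $L^p$ norm of a function, and integration by parts is not available. The algebraic rewriting you propose, expressing the residue as a combination of $\nabla\fn_\eps\cdot\varphi$ and $\DIV\bigl((x^\perp\cdot\varphi)\nabla^\perp\fn_\eps\bigr)$, does not help either: expanding the second term by Leibniz (the only pointwise operation at your disposal) returns a $\nabla\varphi$ contribution. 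There is no way to bound $\|\DIV(\Phi_\eps[\varphi])-\fn_\eps\DIV\varphi\|_{L^p}$ by $\|\varphi\|_{L^q}$ alone.

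The resolution is that the right-hand side of \eqref{div:est:phi:0} as stated is a typo: it should read $\|\varphi\|_{W^{1,q}(\Omega)}$, and this is exactly what the paper's own proof delivers (and what is used everywhere the lemma is invoked later). Once you accept this, your argument closes immediately: the residue is dominated pointwise by $|x|\,|\nabla\fn_\eps|\,|\nabla\varphi|$, and H\"older with item~(1) of Proposition~\ref{prop:cut:off} gives $c(\eps)\|\varphi\|_{W^{1,q}}$.
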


In the following we always omit the dependence on $ p $ and $ q $ for $  c_{p,q}(\eps) $ and we write $  c_{p,q}(\eps) = c(\eps ) $. 

\begin{proof}

The proof of these inequalities follows from the definition of $ \Phi_{\eps}[\varphi] $ and Proposition \ref{prop:cut:off}. The most interesting one is \eqref{div:est:phi:0}, so we will prove it. First  of all notice that $ \DIV(\Phi_{\eps}^0[\varphi]) = 0 $, in fact 
\begin{align*}
\DIV(\Phi_{\eps}^0[\varphi]) =  (1-\fn_{\eps}) \langle \varphi \rangle_{\eps}(0) - \nabla^{\perp} \fn_{\eps} x^{\perp} \cdot \langle \varphi \rangle_{\eps}(0)  = \DIV(  \nabla^{\perp}((1-\fn_{\eps}) x^{\perp} \cdot \langle \varphi \rangle_{\eps}(0) )) = 0.
\end{align*} 
Then 
\begin{align*}
\DIV( \Phi_{\eps}[\varphi]) - \fn_{\eps} \DIV(\varphi) = & \, \DIV( \Phi_{\eps}[\varphi]) + \DIV(\Phi_{\eps}^0[\varphi])  - \fn_{\eps}\DIV(\varphi) \\
= & \, \nabla \fn_{\eps}(\varphi -  \langle \varphi \rangle_{\eps}(0) ) + \nabla^{\perp}  \fn_{\eps} \otimes x^{\perp} : \nabla \varphi \\ 
& \, + \nabla^{\perp} \fn_{\eps} \otimes (\varphi -  \langle \varphi \rangle_{\eps}(0)  ) : \nabla x^{\perp}.
\end{align*}
Using the above equality, we estimate for $ 1/s = 1/p - 1/q $ 
\begin{align*}
\| \DIV( \Phi_{\eps}[\varphi])  & \, - \fn_{\eps} \DIV(\varphi) \|_{L^{p}(\Omega)} \leq  \| \nabla \fn_{\eps} \|_{L^{s}(\Omega)} \| \varphi -  \langle \varphi \rangle_{\eps}(0) \|_{L^{q}(B_{2\eps \alpha_{2\eps}(0) })}  \\
& \, + \| \nabla^{\perp}  \fn_{\eps} \otimes x^{\perp} \|_{L^{s}(\Omega)}\|  \nabla \varphi \|_{L^{q}(\Omega) } + \|  \nabla^{\perp} \fn_{\eps} \|_{L^{s}(\Omega)} \| \varphi -  \langle \varphi \rangle_{\eps}(0)  \|_{L^{q}(B_{2\eps \alpha_{2\eps}(0) }) } \\
\leq & \, \eps \alpha_{\eps}  C \| \nabla \fn_{\eps} \|_{L^{s}(\Omega)} \| \varphi  \|_{W^{1,q}(\Omega)} + C  \| \nabla^{\perp}  \fn_{\eps} \otimes x^{\perp} \|_{L^{s}(\Omega)}\|  \nabla \varphi \|_{L^{q}(\Omega) } \\
& \, +  \eps \alpha_{\eps}  C \|  \nabla^{\perp} \fn_{\eps} \|_{L^{s}(\Omega)}\|   \varphi \|_{W^{1,q}(\Omega)} \\
\leq & c(\eps) \|\varphi \|_{W^{1,q}(\Omega)},
\end{align*}
where we use that $ p < q $, the Poincar\'e inequality and Proposition \ref{prop:cut:off}.
\end{proof}

\section{Pass to the limit in the weak formulation}

Using the estimates from  \eqref{a:priori:est}, Lemma \ref{press:est:lem} and the fact that $ (\rho_{\eps}, u_{\eps}) $ are solutions to the system \eqref{CNS:equ} the following convergences hold.  

\begin{Lemma}
 \label{Lemma:convergence}
Under the hypothesis of Theorem \ref{main:theo}, we have after passing to subsequence that
\begin{align*}
\rho_{\eps} \cv \rho & \quad && \text{ in } L^{2\gamma-1}([0,T] \times \Omega)  \\
\rho_{\eps} \lra \rho & \quad  && \text{ in } C^0_{w}([0,T); L^{\gamma}(\Omega)) \\
u_{\eps} \cv u & \quad && \text{ in } L^{2}(0,T; H^1_{0}(\Omega)) \\
\fn_{\eps}\rho_{\eps} u_{\eps} +\rho_{\eps}u_{\eps} \cdot \nabla^{\perp} \fn_{\eps}x^{\perp}  \lra  \rho u  & \quad  && \text{ in } C^0_{w}([0,T ); L^{2\gamma/(\gamma+1)}(\Omega)) \\
\left(\fn_{\eps}\rho_{\eps} u_{\eps} +\rho_{\eps}u_{\eps} \cdot \nabla^{\perp} \fn_{\eps} x^{\perp}\right) \otimes u_{\eps} \lra  \rho u \otimes u  & \quad  && \text{ in } \mathcal{D}'((0,T) \times \Omega) \\
\mathds{1}_{B_{2\eps}(0)}\rho_{\eps}^{\gamma} \cv \overline{\rho^{\gamma}} & \quad && \text{ in } L^{(2\gamma-1)/\gamma}([0,T] \times \Omega)
\end{align*}
where $ t \in (0,T)$.

\end{Lemma}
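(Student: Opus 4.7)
The plan is to establish each of the six convergences using the a priori estimates \eqref{a:priori:est}, the improved pressure bound of Lemma \ref{press:est:lem}, the cut-off properties of Proposition \ref{prop:cut:off}, and classical Lions-Feireisl compactness arguments. As a first step I would extract weakly convergent subsequences: Banach-Alaoglu applied to the energy bound gives $u_{\eps}\rightharpoonup u$ in $L^{2}(0,T;W^{1,2}_{0}(\Omega))$, while the $L^{\infty}(0,T;L^{\gamma}(\Omega))$-bound on $\rho_\eps$ combined with the improved pressure estimate of Lemma \ref{press:est:lem} on the shrinking set $\Omega\setminus B_{2\eps}(0)$ (applied with $\theta\nearrow\gamma-1$ together with a diagonal extraction) produces the weak convergences $\rho_{\eps}\rightharpoonup\rho$ in $L^{2\gamma-1}([0,T]\times\Omega)$ and $\mathds{1}_{\Omega\setminus B_{2\eps}(0)}\rho_{\eps}^{\gamma}\rightharpoonup\overline{\rho^{\gamma}}$ in $L^{(2\gamma-1)/\gamma}([0,T]\times\Omega)$; the contribution of $\rho_{\eps}$ inside the vanishing ball is controlled by the $L^{\gamma}$-bound and $|B_{2\eps}(0)|\to 0$.

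The $C^{0}_{w}([0,T);L^{\gamma}(\Omega))$ convergence of $\rho_\eps$ is obtained from the continuity equation: by H\"older, $\rho_{\eps}u_{\eps}$ is uniformly bounded in $L^{\infty}(0,T;L^{2\gamma/(\gamma+1)}(\Omega))$, so $\partial_{t}\rho_{\eps}=-\DIV(\rho_{\eps}u_{\eps})$ is uniformly bounded in $L^{\infty}(0,T;W^{-1,2\gamma/(\gamma+1)}(\Omega))$. This gives equicontinuity of $t\mapsto\rho_{\eps}(t)$ in a negative Sobolev norm, and combined with the spatial $L^{\gamma}$-bound and Arzel\`a-Ascoli in the weak topology yields the claimed convergence. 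Applying the same scheme to the weak momentum equation, whose right-hand side $-\DIV(\rho_{\eps}u_{\eps}\otimes u_{\eps})+\DIV(\bbS(u_{\eps}))+\nabla\rho_{\eps}^{\gamma}$ is uniformly bounded in a sufficiently negative Sobolev norm, provides $C^{0}_{w}([0,T);L^{2\gamma/(\gamma+1)}(\Omega))$-precompactness of $\rho_{\eps}u_{\eps}$; the limit is identified as $\rho u$ by pairing the strong convergence of $\rho_{\eps}$ in $C^{0}([0,T];W^{-1,\gamma}(\Omega))$ (compact Sobolev embedding applied to the $C^0_w$ convergence) with $u_\eps\rightharpoonup u$ in $L^2(0,T;W^{1,2}_0)$.

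For the modified momentum I would split it into the main part $\fn_{\eps}\rho_{\eps}u_{\eps}$ and the correction $(\rho_{\eps}u_{\eps}\cdot\nabla^{\perp}\fn_{\eps})x^{\perp}$. The correction tends to zero strongly in $L^{\infty}(0,T;L^{s}(\Omega))$ for $s$ slightly below $2\gamma/(\gamma+1)$, by combining the estimate $\||x|\nabla\fn_{\eps}\|_{L^{q}}\leq C(\eps\alpha_{\eps})^{2/q}\to 0$ from Proposition \ref{prop:cut:off} with the uniform $L^{\infty}(0,T;L^{2\gamma/(\gamma+1)})$-bound on $\rho_{\eps}u_{\eps}$. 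For the main part, since $\fn_{\eps}\to 1$ almost everywhere boundedly and $\rho_{\eps}u_{\eps}\to\rho u$ in $C^{0}_{w}([0,T);L^{2\gamma/(\gamma+1)})$ by the previous paragraph, the product converges in $C^{0}_{w}$ to $\rho u$.

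The main obstacle is the distributional convergence of the quadratic term $(\fn_{\eps}\rho_{\eps}u_{\eps}+\rho_{\eps}u_{\eps}\cdot\nabla^{\perp}\fn_{\eps}x^{\perp})\otimes u_{\eps}\to\rho u\otimes u$ in $\mathcal{D}'((0,T)\times\Omega)$. Here I would use that the first factor is strongly precompact in $C^{0}([0,T];W^{-1,r}_{\mathrm{loc}}(\Omega))$ for an appropriate $r$, via the $C^{0}_{w}$ convergence established above combined with the compact embedding $L^{2\gamma/(\gamma+1)}\hookrightarrow W^{-1,r}_{\mathrm{loc}}$; the second factor converges weakly in $L^{2}(0,T;W^{1,2}_{0}(\Omega))$, which in two dimensions continuously embeds into $L^{2}(0,T;L^{p}(\Omega))$ for every $p<+\infty$. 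A duality pairing against test functions in $\mathcal{D}((0,T)\times\Omega)$ then gives the announced limit, and the condition $\gamma>2$ is exactly what guarantees the H\"older exponents close at this step, as detailed in the remark after Theorem \ref{main:theo}.
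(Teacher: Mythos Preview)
Your handling of the first three convergences, of the pressure weak limit, and of the quadratic term (once the fourth convergence is in hand) follows the standard Lions--Feireisl pattern and is fine. The gap is in the fourth convergence.

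You propose to obtain $C^0_w$-precompactness of $\rho_\eps u_\eps$ on $\Omega$ first, by reading a uniform negative-Sobolev bound on $\partial_t(\rho_\eps u_\eps)$ from the momentum equation, and only afterwards pass to the cut-off version. But the momentum equation does \emph{not} hold in $\mathcal{D}'((0,T)\times\Omega)$ after extension by zero: the weak formulation on $\calF_\eps$ admits only test functions vanishing on $\partial\calS_\eps$. Testing with a $\varphi\in C^\infty_c(\Omega)$ produces an uncontrolled boundary contribution $\int_{\partial\calS_\eps}(\bbS(u_\eps)-\rho_\eps^\gamma\bbI)n\cdot\varphi$ (the fluid force on the obstacle). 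Equivalently, if you cut off $\varphi$ by a naive $\eta_\eps$, the term $\int\rho_\eps^\gamma\,\nabla\eta_\eps\cdot\varphi$ appears, and since globally $\rho_\eps^\gamma$ is only in $L^\infty_tL^1_x$ while $\|\nabla\eta_\eps\|_{L^\infty}\sim\eps^{-1}$, no uniform bound follows. This is exactly the obstruction discussed around \eqref{pressure}, and it is the reason the lemma is stated for the \emph{modified} momentum rather than for $\rho_\eps u_\eps$ itself.

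The paper's route is the reverse of yours: it works directly with the modified momentum via the duality identity \eqref{iner:time:der},
\[
\int_0^T\!\!\int_\Omega\bigl(\fn_\eps\rho_\eps u_\eps+\rho_\eps u_\eps\cdot\nabla^\perp\fn_\eps\,x^\perp\bigr)\cdot\partial_t\varphi
=\int_0^T\!\!\int_\Omega\rho_\eps u_\eps\cdot\partial_t\Phi_\eps[\varphi],
\]
which converts the time derivative of the modified object into the momentum equation on $\calF_\eps$ tested with the \emph{admissible} function $\Phi_\eps[\varphi]$. The corrector in $\Phi_\eps$ is what makes the pressure contribution controllable: by Lemma~\ref{lem:est:tf}, $\DIV(\Phi_\eps[\varphi])-\fn_\eps\DIV\varphi$ is small in every $L^q$, $q<\infty$, so the improved pressure estimate of Lemma~\ref{press:est:lem} suffices. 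The right-hand side then splits, via Lemma~\ref{lem:equi:con}, into a part uniformly bounded in $L^p(0,T;H^{-s})$ and a remainder vanishing in $L^1(0,T;H^{-s})$, yielding equicontinuity. In short, the modified momentum is the object whose time regularity is accessible; trying to get it from the raw $\rho_\eps u_\eps$ misses the point of the construction.
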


\begin{proof}

Using \eqref{a:priori:est}, Lemma \ref{press:est:lem} and the fact that $ (\rho_{\eps}, u_{\eps}) $ are solutions to the system \eqref{CNS:equ}, it is easy to deduce all the convergence except the fourth one. By  \eqref{a:priori:est} we already know that
\begin{equation*}
\|\rho_{\eps} u_{\eps}\|_{L^{\infty}(0,T;L^{2\gamma/(\gamma+1)}(\F_{\eps}))} \leq  \|\sqrt{\rho_{\eps}}\|_{L^{\infty}(0,T;L^{2\gamma}(\F_{\eps}))}\|\sqrt{\rho_{\eps}}u_{\eps}\|_{L^{\infty}(0,T;L^{2}(\F_{\eps}))} \leq C.
\end{equation*}
Moreover using that $ |\fn_{\eps}| $ and $ |\nabla^{\perp}\fn_{\eps}\otimes x^{\perp}| $ are bounded, we deduce that up to subsequence
$$ \fn_{\eps}\rho_{\eps} u_{\eps} +\rho_{\eps}u_{\eps} \cdot \nabla^{\perp} \fn_{\eps} x^{\perp}  \cvwstar  \rho u   \quad   \text{ in }L^{\infty}([0,T ); L^{2\gamma/(\gamma+1)}(\Omega)).  $$
To show the strong convergence in time, it is enough to prove that $  \fn_{\eps}\rho_{\eps} u_{\eps} +\rho_{\eps}u_{\eps} \cdot \nabla^{\perp} \fn_{\eps}x^{\perp} $ is continuous an equicontinuous in some $ H^{-s}(\Omega) $ for some $ s $ big enough and to apply Appendix C of \cite{Lions1}. To do that we will apply the following lemma.

\begin{Lemma}
\label{lem:equi:con}
Let $ H $ an Hilbert space and let $ f_{n}: (0,T) \lra H $ a sequence of functions. If $ \partial_t f_{n} = g_{n}^1 + g_{n}^2 $ where 
\begin{itemize}

\item $ \|g_{n}^1\|_{L^{p}(0,T;H)} \leq C $ with $ C $ independent of $ n $ and $ p > 1 $,

\item $ \lim_{n \to +\infty} \|g_n^2\|_{L^{1}(0,T;H)} =0 $.

\end{itemize}
Then the functions $ f_{n} $ are continuous and equicontinuous.
\end{Lemma}

For $ \varphi \in C^{\infty}((0,T)\times \Omega ) $, we notice that 
\begin{align}
\label{iner:time:der}
\int_{0}^T \int_{\Omega} \left(  \fn_{\eps}\rho_{\eps} u_{\eps} +\rho_{\eps}u_{\eps} \cdot \nabla^{\perp} \fn_{\eps}x^{\perp} \right)\cdot \partial_t \varphi = & \, \int_{0}^T \int_{\Omega} \rho_{\eps} u_{\eps} \cdot \partial_ t \left(  \fn_{\eps}\varphi + \nabla^{\perp} \fn_{\eps}x^{\perp} \cdot \varphi \right) \\
= & \, \int_{0}^T \int_{\Omega} \rho_{\eps} u_{\eps} \cdot \partial_t \Phi_{\eps}[\varphi]. \nonumber
\end{align}
We can now use the momentum equation of \eqref{CNS:equ} tested with  $ \Phi_{\eps}[\varphi] $ to deduce
\begin{align}
\label{der:in:time}
\int_{0}^T \int_{\Omega}   \rho_{\eps} u_{\eps} \cdot \partial_ t   \Phi_{\eps}[\varphi] = & \,  - \int_0^T \int_ {\Omega} \rho_{\eps} u_{\eps}\otimes u_{\eps}:D\Phi_{\eps}[\varphi] \\ & \, +  \int_0^T\int_{\Omega} \mathbb{S}u : D\Phi_{\eps}[\varphi] - \int_0^T \int_{\Omega} \nabla \rho^{\gamma} \DIV(\Phi_{\eps}[\varphi]). \nonumber
\end{align}
We now bound the terms on the right hand side separately.
Notice that 
\begin{align*}
 \int_0^T \int_ {\Omega} \rho_{\eps} u_{\eps}\otimes u_{\eps}:D\Phi_{\eps}[\varphi]  = & \, \int_0^T \int_ {\Omega} \rho_{\eps} u_{\eps}\otimes u_{\eps}:\fn_{\eps} D\varphi  \\ & \,  + \int_0^T \int_ {\Omega} \rho_{\eps} u_{\eps}\otimes u_{\eps}:\left(D\Phi_{\eps}[\varphi] - \fn_{\eps} D\varphi\right).
\end{align*}
Let recall that in dimension two $ W^{1,2} \subset L^{p} $ for any $ p < + \infty $ and that $ W^{1,2} \not\subset L^{\infty} $, in particular $ \|f\|_{L^p} \leq \|f\|_{W^{1,2}} $ for any $ p < + \infty $. In the following we denote by $ \|f\|_{L^{\infty^-}} $ the norm  $ \|f\|_{L^{p}} $ for $ p $ big enough. 
We have
\begin{equation*}
\left| \int_0^T \int_ {\Omega} \rho_{\eps} u_{\eps}\otimes u_{\eps}:\fn_{\eps} D\varphi  \right| \leq \| \rho_{\eps} u_{\eps} \|_{L^{\infty}(0,T:L^{2\gamma/(\gamma+1)}(\F_{\eps}))}\|u_{\eps}\|_{L^2(0,T;L^{\infty^-}(\F_{\eps}))}\|D\varphi\|_{L^{2}(0,T; L^{q}(\Omega))},
\end{equation*}
for $ q > 2\gamma/(\gamma-1)$. Moreover
\begin{align}
& \left| \int_0^T \int_ {\Omega} \rho_{\eps} u_{\eps}\otimes u_{\eps}:\left(D\Phi_{\eps}[\varphi] - \fn_{\eps} D\varphi\right) \right|  \nonumber \\  & \quad \quad \quad \leq \|\rho_{\eps}\|_{L^{\infty}(0,T;L^{\gamma}(\Omega))} \| u_{\eps}\|_{L^{2}(0,T;L^{\infty^-}(\Omega))}^{2} \|D\Phi_{\eps}[\varphi] - \fn_{\eps} D\varphi\|_{L^{1}(0,T; L^{\tilde{q}}(\Omega))} \label{conv:err:nonlin} \\ 
 & \quad \quad \quad \leq c(\eps) \|\rho_{\eps}\|_{L^{\infty}(0,T;L^{\gamma}(\Omega))} \| u_{\eps}\|_{L^{2}(0,T;L^{\infty^-}(\Omega))}^{2} \|\varphi\|_{L^{1}(0,T; W^{1,q}(\Omega))} \nonumber 
\end{align}
where $ 2 > q > \tilde{q} >  \gamma/ (\gamma -1 ) $ and $ c(\eps) \lra 0 $ as $ \eps \lra 0 $. In the last inequality we used Lemma \ref{lem:est:tf}. Let now move to the second term of right hand side of \eqref{der:in:time}. As before 
\begin{align*}
\left|\int_0^T\int_{\Omega} \mathbb{S}u :  D\Phi_{\eps}[\varphi] \right| \leq & \,  \left|\int_0^T\int_{\Omega} \mathbb{S}u : \fn_{\eps} D\varphi \right| + \left|\int_0^T\int_{\Omega} \mathbb{S}u : \left(D\Phi_{\eps} - \fn_{\eps} D\varphi\right) \right| \\
\leq & (1+c(\eps)) \|u_{\eps}\|_{L^{2}(0,T;W^{1,2}(\Omega)) } \|\varphi\|_{L^{2}(0,T; W^{1,2}(\Omega))} 
\end{align*}
where $ c(\eps) \lra 0 $ as $ \eps \lra 0 $. We are left with the last term of \eqref{der:in:time}. Using that $ \Phi_{\eps}[\varphi] = 0 $ in $ B_{2\eps}(0) $, we rewrite 
 \begin{align}
\int_{0}^T\int_{\Omega} \rho_{\eps}^{\gamma} \DIV(\Phi_{\eps}[\varphi])  = & \, \int_{0}^T\int_{\Omega}\mathds{1}_{B_{2\eps}(0)} \rho_{\eps}^{\gamma} \fn_{\eps} \DIV(\varphi) + \int_{0}^T\int_{\Omega} \mathds{1}_{B_{2\eps}(0)} \rho_{\eps}^{\gamma} \left(\DIV(  \Phi_{\eps}[\varphi]  )  - \fn_{\eps}\DIV(\varphi) \right) \label{secon:term:sei}
\end{align}
Notice that 
\begin{equation*}
\left| \int_{0}^T\int_{\Omega}\mathds{1}_{B_{2\eps}(0)} \rho_{\eps}^{\gamma} \DIV(\varphi) \right|  \leq \| \mathds{1}_{B_{2\eps}(0)} \rho^{\gamma} \|_{L^{p}((0,T) \times \Omega )} \|\DIV(\varphi)\|_{L^{q}((0,T) \times \Omega )}
\end{equation*}
is uniformly bounded for any $ 1/p + 1/q = 1 $ and $ p < (2\gamma-1)/\gamma $.  

The same estimate holds for the second term of \eqref{secon:term:sei}, moreover from Lemma \ref{lem:est:tf}, we have
\begin{align}
& \left| \int_{0}^T\int_{\Omega} \mathds{1}_{B_{2\eps}(0)} \rho_{\eps}^{\gamma} \left(\DIV(  \Phi_{\eps}[\varphi]  )  - \fn_{\eps}\DIV(\varphi) \right) \right| \nonumber \\ & \quad \quad \quad \leq   \| \mathds{1}_{B_{2\eps}(0)} \rho^{\gamma} \|_{L^{p}((0,T) \times \Omega )} \| \DIV(  \Phi_{\eps}[\varphi]  )  - \fn_{\eps}\DIV(\varphi) \|_{L^{q}((0,T) \times \Omega )} \label{happy} \\ & \quad \quad \quad  \leq  c(\eps) \| \mathds{1}_{B_{2\eps}(0)} \rho^{\gamma} \|_{L^{p}((0,T) \times \Omega )} \| \varphi \|_{L^{q}(0,T;W^{1,\tilde{q}}(\Omega))} \nonumber
\end{align}
with $ \tilde{q} > \max\{ 2, q \}$ and with $ c(\eps) \lra 0 $ as  $ \eps \lra 0 $.

We can now apply Lemma \ref{lem:equi:con} and deduce 
\begin{equation*}
\fn_{\eps}\rho_{\eps} u_{\eps} +\rho_{\eps}u_{\eps} \cdot \nabla^{\perp} \fn_{\eps}x^{\perp}  \lra  \rho u  \quad   \text{ in } C^0_{w}([0,T ); L^{2\gamma/(\gamma+1)}(\Omega)).
\end{equation*}

\end{proof}

We will now pass to the limit in the weak formulation satisfied by $ \rho_{\eps} $ and $ u_{\eps} $. First of all notice that it is easy to pass to the limit in the transport equation satisfied by the density, so let us concentrate on the momentum equation. For $ \varphi \in C^{\infty}_{c}([0,T) \times \Omega) $ we test  the weak formulation of the momentum equation satisfied by $\rho_{\eps}$, $ u_{\eps}$ with $ \Phi_{\eps}[\varphi] = \fn_{\eps} \varphi + \nabla^{\perp} \fn_{\eps} x^{\perp} \cdot \varphi $. We deduce that 
\begin{align*}
\int_{\calF_{\eps}} q^{in}_{\eps} \Phi_{\eps}[\varphi](0,.)  + \int_{0}^{T} \int_{\calF_{\eps}} (\rho_{\eps} u_{\eps})\cdot \partial_t \Phi_{\eps}[\varphi]  + \int_{0}^{T} \int_{\calF_{\eps}}[\rho_{\eps} u_{\eps} \otimes u_{\eps}]: & \,  D \Phi_{\eps}[\varphi] + \rho^{\gamma}_{\eps} \DIV \Phi_{\eps}[\varphi]\\  =  & \, \int_{0}^{T} \int_{\calF_{\eps}} \bbS u_{\eps} : D \Phi_{\eps}[\varphi].
\end{align*}
We will pass to the limit in $ \eps $ in any term separately. First of all notice that 
$$ \Phi_{\eps}[\varphi](0,.) = \fn_{\eps} \varphi(0,.) + \nabla^{\perp} \fn_{\eps} x^{\perp} \cdot \varphi(0,.) \lra \varphi(0,.) \quad \text{ in } L^{q}(\Omega) $$
for any $ q < +\infty $ by dominate convergence. We deduce that 
\begin{equation*}
\int_{\calF_{\eps}} q^{in}_{\eps} \Phi_{\eps}[\varphi](0,.) = \int_{\calF_{\eps}}\frac{ q^{in}_{\eps}}{\sqrt{\rho_{\eps}^{in}}} \sqrt{\rho_{\eps}^{in}}\Phi_{\eps}(0,.)  \lra \int_{\Omega}\frac{ q^{in}}{\sqrt{\rho^{in}}} \sqrt{\rho^{in}}\varphi(0,.) = \int_{\Omega} q^{in} \varphi(0,.) 
\end{equation*}
where we used $q^{in}_{\eps}/ \sqrt{\rho_{\eps}^{in}}  \lra q^{in}/ \sqrt{\rho^{in}}  $ in $ L^2(\Omega)$ and  $  \sqrt{\rho_{\eps}^{in}}  \lra  \sqrt{\rho^{in}}  $ in $ L^{2\gamma}(\Omega)$ . Using \eqref{iner:time:der}, we notice that
\begin{align*}
\int_{0}^{T} \int_{\calF_{\eps}} (\rho_{\eps} u_{\eps})\cdot \partial_t \Phi_{\eps}[\varphi] = \int_{0}^{T} \int_{\calF_{\eps}} (\fn_{\eps}\rho_{\eps} u_{\eps} +\rho_{\eps}u_{\eps} \cdot \nabla^{\perp} \fn_{\eps}x^{\perp} )\cdot \partial_t \varphi \lra \int_{0}^{T} \int_{\Omega} \rho u \cdot \partial_t \varphi
\end{align*}
where we use the convergence from Lemma \ref{Lemma:convergence}. For the next term let rewrite
\begin{align}
\int_{0}^{T} \int_{\calF_{\eps}}[\rho_{\eps} u_{\eps} \otimes u_{\eps}]: D \Phi_{\eps}[\varphi] = & \, \int_{0}^{T} \int_{\calF_{\eps}}\left[\left(\fn_{\eps}\rho_{\eps} u_{\eps} +\rho_{\eps}u_{\eps} \cdot \nabla^{\perp} \fn_{\eps} x^{\perp}\right) \otimes u_{\eps}\right]: D \varphi \nonumber \\
& \, + \int_{0}^{T} \int_{\calF_{\eps}}[\rho_{\eps} u_{\eps} \otimes u_{\eps}]: \left(D \Phi_{\eps}[\varphi] - \fn_{\eps} D\varphi\right) \label{12} \\
& \, - \int_{0}^{T} \int_{\calF_{\eps}}\left[\rho_{\eps}u_{\eps} \cdot \nabla^{\perp} \fn_{\eps} x^{\perp} \otimes u_{\eps}\right]: D \varphi. \nonumber
\end{align}
Notice that 
\begin{equation*}
 \int_{0}^{T} \int_{\calF_{\eps}}\left[\left(\fn_{\eps}\rho_{\eps} u_{\eps} +\rho_{\eps}u_{\eps} \cdot \nabla^{\perp} \fn_{\eps} x^{\perp}\right) \otimes u_{\eps}\right]: D \varphi  \lra \int_{0}^{T} \int_{\Omega} \rho u \otimes u : D \varphi,
\end{equation*}
due to Lemma \ref{Lemma:convergence}. Moreover the second term of the right hand side of \eqref{12} converges to zero due to  \eqref{conv:err:nonlin}. Finally the last term of  \eqref{12} 
\begin{align*}
\Bigg| \int_{0}^{T} \int_{\calF_{\eps}} & \left[\rho_{\eps}u_{\eps} \cdot \nabla^{\perp} \fn_{\eps} x^{\perp} \otimes u_{\eps}\right]: D \varphi \Bigg|\\  \leq & \, \|\rho_{\eps}\|_{L^{\infty}(0,T;L^{\gamma}(\Omega))} \|u_{\eps}\|_{L^{2}(0,T;L^{\infty^-}(\Omega))}^{2} \|\nabla^{\perp} \fn_{\eps} x^{\perp} \|_{L^2(\Omega)} \\ & \,  \lra 0,
\end{align*}
where we use Proposition \ref{prop:cut:off}. We deduce 
\begin{equation*}
\int_{0}^{T} \int_{\calF_{\eps}}[\rho_{\eps} u_{\eps} \otimes u_{\eps}]: D \Phi_{\eps}[\varphi] \lra \int_{0}^{T} \int_{\Omega} \rho u \otimes u : D \varphi.
\end{equation*}
The next term is
\begin{align*}
\int_{0}^{T} \int_{\calF_{\eps}}  \rho^{\gamma}_{\eps} \DIV \Phi_{\eps}[\varphi] = & \, \int_{0}^{T} \int_{\calF_{\eps}} \mathds{1}_{B_{2\eps}(0)} \rho^{\gamma}_{\eps} \DIV \varphi +  \int_{0}^{T} \int_{\calF_{\eps}} \mathds{1}_{B_{2\eps}(0)} \rho^{\gamma}_{\eps}\left( \DIV( \Phi_{\eps}[\varphi] - \fn_{\eps} \DIV(\varphi)\right)  \\
&  \, \lra  \int_{0}^{T} \int_{\Omega}  \overline{\rho^{\gamma}} \DIV \varphi,
\end{align*}
where we used Lemma \ref{Lemma:convergence} for the convergence of the first term and \eqref{happy} for the second one. Finally
\begin{align*}
\int_{0}^{T} \int_{\calF_{\eps}} \bbS u_{\eps} : D \Phi_{\eps} = & \, \int_{0}^{T} \int_{\calF_{\eps}} \bbS u_{\eps} : \fn_{\eps} D \varphi + \int_{0}^{T} \int_{\calF_{\eps}} \bbS u_{\eps} :( D \Phi_{\eps} - D \fn_{\eps} D \varphi) \\ & \,  \lra \int_{0}^{T} \int_{\Omega} \bbS u : D \varphi
\end{align*}
where we used the weak convergence of $ u_{\eps} $ from Lemma \ref{Lemma:convergence} and the strong convergence of $ \fn_{\eps} D \varphi \lra D\varphi $ in $ L^{2} $. The second term converge to zero from Lemma \eqref{lem:est:tf}. 

Putting all this convergence together we deduce that $ \rho $ and $ u $ satisfy
\begin{align*}
\int_{\Omega} q^{in} \varphi(0,.)  + \int_{0}^{T} \int_{\Omega} \rho u \cdot \partial_t \varphi  + \int_{0}^{T} \int_{\Omega}[\rho u \otimes u]: D \varphi +  \overline{ \rho^{\gamma}} \DIV \varphi  =  \int_{0}^{T} \int_{\Omega} \bbS u : D \varphi.
\end{align*}

It now remains to show that $ \overline{\rho^{\gamma}} = \rho^{\gamma} $. We will show this in the next section.

\section{Identification of the pressure}

We now show that $ \overline{\rho^{\gamma}} = \rho^{\gamma} $ to do that  we follow the strategy introduced by Lions in \cite{Lions2}. Let recall that we are in the case $ \gamma > 2 $ and dimension two, it is then enough to show the following lemma.

\begin{Lemma}
For any $ \psi \in C^{\infty}_{c}(\Omega) $, it holds
\begin{equation*}
\lim_{\eps \lra 0} \int_{0}^T \int_{\Omega} \psi^{2}\fn_{\eps}\left( \rho_{\eps}^{\gamma} - (2\mu+\gamma) \DIV(u_{\eps})      \right) \rho_{\eps} = \int_{0}^T \int_{\Omega} \psi^2\left(\overline{\rho^{\gamma}} -(2\mu+\lambda)\DIV(u)\right) \rho
\end{equation*}
up to subsequence.
\end{Lemma}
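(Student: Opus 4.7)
The plan is to implement Lions' effective-viscous-flux identification \cite{Lions2} in the perforated domain using the ad hoc test functions $\Phi_\eps[\cdot]$ introduced above. Fix $\psi\in C^\infty_c(\Omega)$ and set
$$
\varphi_\eps := \Phi_\eps\bigl[\psi\,\nabla\Delta^{-1}(\psi\rho_\eps)\bigr], \qquad \varphi := \psi\,\nabla\Delta^{-1}(\psi\rho),
$$
where $\rho_\eps,\rho$ are extended by zero outside $\Omega$, $\Delta^{-1}$ denotes the Newtonian potential in $\R^2$, and a standard mollification in time is tacit. I would test the $\eps$-momentum equation with $\varphi_\eps$, the limit equation (obtained in the previous section) with $\varphi$, then subtract and send $\eps\to 0$.

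The basic algebraic observation is $\DIV\nabla\Delta^{-1}(g)=g$. Combined with Lemma \ref{lem:est:tf} it yields
$$
\int_0^T\!\!\int_\Omega \rho_\eps^\gamma\DIV\varphi_\eps \,=\, \int_0^T\!\!\int_\Omega \psi^2\,\fn_\eps\,\rho_\eps^{\gamma+1} + \int_0^T\!\!\int_\Omega \rho_\eps^\gamma\,\fn_\eps\,\nabla\psi\cdot\nabla\Delta^{-1}(\psi\rho_\eps) + o_\eps(1),
$$
and the analogous identity for the viscous term with $\rho_\eps^\gamma$ replaced by $(2\mu+\lambda)\DIV(u_\eps)$. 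The same manipulation on the limit equation produces the target quantities $\int\psi^2\overline{\rho^\gamma}\rho$ and $(2\mu+\lambda)\int\psi^2\DIV(u)\rho$, with matching $\nabla\psi$-contributions (with $\rho$ replacing $\fn_\eps\rho_\eps$). The $o_\eps(1)$ remainders are controlled by Lemma \ref{lem:est:tf} and Proposition \ref{prop:cut:off}: the field $\psi\,\nabla\Delta^{-1}(\psi\rho_\eps)$ is uniformly bounded in $W^{1,q}(\Omega)$ for some $q>2$ by elliptic regularity and the uniform $L^\gamma$ bound on $\rho_\eps$, so the constant $c(\eps)\to 0$ from Lemma \ref{lem:est:tf} absorbs the discrepancy $\DIV\varphi_\eps-\fn_\eps\DIV\bigl(\psi\nabla\Delta^{-1}(\psi\rho_\eps)\bigr)$; the pressure remainder uses in addition the improved pressure bound of Lemma \ref{press:est:lem}.

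The main obstacle, as in the original proof, is the convective and time-derivative contribution. After using the two continuity equations to integrate by parts in time, this contribution reduces to the commutator identity
$$
\lim_{\eps\to 0}\int_0^T\!\!\int_\Omega \psi\,(\rho_\eps u_\eps)_i\,u_\eps^j\,\partial_i\partial_j\Delta^{-1}(\psi\rho_\eps) \,=\, \int_0^T\!\!\int_\Omega \psi\,(\rho u)_i\,u^j\,\partial_i\partial_j\Delta^{-1}(\psi\rho),
$$
which is established via the div--curl compensated compactness lemma applied to the Riesz-transform matrix $\partial_i\partial_j\Delta^{-1}$, using the strong time-compactness $\fn_\eps\rho_\eps u_\eps+\rho_\eps u_\eps\cdot\nabla^\perp\fn_\eps\,x^\perp \to \rho u$ in $C^0_w(0,T;L^{2\gamma/(\gamma+1)}(\Omega))$ from Lemma \ref{Lemma:convergence} together with the weak convergence $u_\eps\rightharpoonup u$ in $L^2(0,T;W^{1,2}(\Omega))$. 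The H\"older pairings close precisely at $\gamma>2$ in two dimensions, exactly as in the passage to the limit in the momentum equation performed in the previous section. Combining the cancellation of the $\nabla\psi$-terms with this convective commutator identity and the pressure/viscous identifications above yields the stated equality.
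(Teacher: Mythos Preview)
Your proposal is correct and follows essentially the same route as the paper: the paper also tests the $\eps$-momentum equation with $\psi\,\Phi_\eps[\nabla\Delta^{-1}(\psi\rho_\eps)]$ (which coincides with your $\Phi_\eps[\psi\nabla\Delta^{-1}(\psi\rho_\eps)]$ since $\Phi_\eps$ is linear with scalar coefficients), tests the limit equation with $\psi\nabla\Delta^{-1}(\psi\rho)$, uses Lemma~\ref{lem:est:tf} to discard the $\Phi_\eps$-correction terms, matches the $\nabla\psi$ contributions, and reduces the convective/time-derivative block to the Lions commutator identity~\eqref{7:7}. The only place where the paper is slightly more explicit is the viscous term, where one integration by parts (equation~\eqref{8:8}) is needed to isolate $(2\mu+\lambda)\DIV(u_\eps)\rho_\eps$ from $\mathbb{S}u_\eps:\nabla^2\Delta^{-1}(\psi\rho_\eps)$; your phrase ``the analogous identity for the viscous term'' covers this standard step.
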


\begin{proof}
Consider $ \phi_{\eps} = \psi \Phi_{\eps}[\nabla \Delta^{-1} [\psi \rho_{\eps}]]= \psi \fn_{\eps} \nabla \Delta^{-1} [\psi \rho_{\eps}] + \psi \nabla^{\perp}\fn_{\eps}x^{\perp} \cdot \nabla \Delta^{-1} [\psi \rho_{\eps}] $ and $ \phi = \psi  \nabla \Delta^{-1} [\psi \rho] $. From the a priori estimates on the solutions $ \rho_{\eps} $, $ u_{\eps} $, we notice that $ \nabla \Delta^{-1} [\psi \rho_{\eps}] $ is uniformly bounded in $ L^{\infty}(0,T;W^{1,\gamma}(\Omega))  $ and $ \partial_{t }  \nabla \Delta^{-1} [\psi \rho_{\eps}] = -  \nabla \Delta^{-1} [\psi\DIV( \rho_{\eps}u_{\eps})] $ is uniformly bounded is some $ L^p $ spaces. We can now test the weak formulation satisfied by  $ \rho_{\eps} $, $ u_{\eps} $ by $ \phi_{\eps} $ and the one of $ \rho $, $ u $ by $ \phi $. Using the convergence of initial data  
we deduce
\begin{align}
\lim_{\eps \to 0 }   \int_{0}^{T}\int_{\Omega} & \rho_{\eps} u_{\eps} \cdot \partial_t \phi_{\eps} + \rho_{\eps} u_{\eps} \otimes u_{\eps} : \nabla \phi_{\eps} + \rho_{\eps}^{\gamma} \DIV(\phi_{\eps})- \mathbb{S}u_{\eps}: \nabla \phi_{\eps} \label{5:5}\\
= & \int_{0}^{T}\int_{\Omega} \rho u \cdot \phi + \rho u \otimes u : \nabla \phi + \overline{\rho^{\gamma}} \DIV(\phi)- \mathbb{S}u: \nabla \phi. \nonumber
\end{align}
We will now rewrite in an appropriate way the above equality. Notice that
\begin{align*}
\partial_t \phi_{\eps} = \psi \Phi_{\eps}[\nabla \Delta^{-1}(\psi \partial_t \rho_{\eps})] = - \psi \Phi_{\eps}[\nabla \Delta^{-1}(\DIV(\psi \rho_{\eps} u_{\eps})] + \psi \Phi_{\eps} [\nabla \Delta^{-1}(\nabla(\psi) \rho_{\eps} u_{\eps}]
\end{align*}
We deduce that
\begin{align*}
 \int_{0}^{T}\int_{\Omega}  \rho_{\eps} u_{\eps} \cdot \partial_t \phi_{\eps} = & \,   - \int_{0}^{T}\int_{\Omega} \psi  \fn_{\eps} \rho_{\eps} u_{\eps} \cdot \nabla \Delta^{-1}(\DIV(\psi \rho_{\eps} u_{\eps})) \\
 & \,  - \int_{0}^{T}\int_{\Omega} \psi  \fn_{\eps} \rho_{\eps} u_{\eps} \cdot (\Phi_{\eps} - \text{Id})[\nabla \Delta^{-1}(\DIV(\psi \rho_{\eps} u_{\eps}))] \\
 & \, + \int_{0}^{T}\int_{\Omega}    \rho_{\eps} u_{\eps} \cdot \psi \Phi_{\eps}[\nabla \Delta^{-1}(\nabla(\psi) \rho_{\eps} u_{\eps})].
\end{align*}
The second term converges to zero as $ \eps $ goes to zero due to Lemma \ref{lem:est:tf} in fact it contains $ \Phi_{\eps} - \text{Id} $. Let rewrite the third term
\begin{align}
\int_{0}^{T}\int_{\Omega}    \rho_{\eps} u_{\eps} \cdot \psi \Phi_{\eps} &\,[\nabla \Delta^{-1}(\nabla(\psi) \rho_{\eps} u_{\eps})] =    \int_{0}^{T}\int_{\Omega}    \psi \Phi_{\eps}[\rho_{\eps} u_{\eps}]\cdot \nabla \Delta^{-1}(\nabla(\psi) \Phi_{\eps}[\rho_{\eps} u_{\eps})] \nonumber  \\
 & \, + \int_{0}^{T}\int_{\Omega}    \psi \Phi_{\eps}[\rho_{\eps} u_{\eps}] \cdot \nabla \Delta^{-1}(\nabla(\psi) (\text{Id}-\Phi_{\eps})[\rho_{\eps} u_{\eps}] \label{strategy:1} \\
  & \, + \int_{0}^{T}\int_{\Omega}    \psi (\text{Id}-\Phi_{\eps})[\rho_{\eps} u_{\eps}] \cdot \nabla \Delta^{-1}(\nabla(\psi) \rho_{\eps} u_{\eps})  \nonumber \\
 & \, + \int_{0}^{T}\int_{\Omega}    \rho_{\eps} u_{\eps} \cdot \psi(\Phi_{\eps}-\text{Id})[\nabla \Delta^{-1}(\nabla(\psi) \rho_{\eps} u_{\eps})]. \nonumber 
\end{align}
The first three terms  converge to zero as $ \eps $ goes to zero due to Lemma \ref{lem:est:tf}. To tackle the last one, notice that $ \nabla \Delta^{-1}(\nabla(\psi) \Phi_{\eps}[\rho_{\eps} u_{\eps}])  $ converges strongly to $ \nabla \Delta^{-1}(\nabla(\psi) \rho u )  $ in $ C^{0}(0,T;L^{q}(\Omega)) $ for any $ q < 2 \gamma $ and $  \Phi_{\eps}[\rho_{\eps} u_{\eps}] $ converges to $ \rho u $ in $ C_{w}(0,T; L^{2\gamma/(\gamma+1)}(\Omega)) $, we deduce that 
\begin{align}
 \int_{0}^{T}\int_{\Omega}  \rho_{\eps} u_{\eps} \cdot \partial_t \phi_{\eps} = &  \, - \int_{0}^{T}\int_{\Omega} \psi  \fn_{\eps} \rho_{\eps} u_{\eps} \cdot \nabla \Delta^{-1}(\DIV(\psi \rho_{\eps} u_{\eps})) \label{1:1} \\ & \, + \int_{0}^{T}\int_{\Omega}    \psi \rho u \cdot \nabla \Delta^{-1}(\nabla(\psi) \rho u ))  +  c(\eps), \nonumber
\end{align}
with $ c(\eps) \lra 0 $ as $ \eps \lra 0 $.
Similarly we have 
\begin{align*}
\int_{0}^{T}\int_{\Omega} \rho_{\eps} u_{\eps} \otimes u_{\eps} : \nabla \phi_{\eps} = & \, \int_{0}^{T}\int_{\Omega} \rho_{\eps} u_{\eps} \otimes u_{\eps} : (\nabla \psi \otimes \Phi_{\eps}[\nabla \Delta^{-1}(\psi \rho_{\eps}))] \\
&\, +  \int_{0}^{T}\int_{\Omega}\fn_{\eps} \psi \rho_{\eps} u_{\eps} \otimes u_{\eps} : \nabla (\nabla \Delta^{-1}(\psi \rho_{\eps}))  \\
&\, +  \int_{0}^{T}\int_{\Omega}  \psi \rho_{\eps} u_{\eps} \otimes u_{\eps} : (\nabla \Phi_{\eps} -\fn_{\eps} \nabla)[\nabla \Delta^{-1}(\psi \rho_{\eps})]
\end{align*}
Following the strategy of $ \eqref{strategy:1} $ we deduce that the first term of the right hand side converges to 
\begin{equation*}
\int_{0}^{T}\int_{\Omega} \rho u \otimes u : (\nabla \psi \otimes \nabla \Delta^{-1}(\psi \rho) ) .
\end{equation*}
Moreover the last one converges to zero due to Lemma \ref{lem:est:tf}. We deduce that
\begin{align}
\int_{0}^{T}\int_{\Omega} \rho_{\eps} u_{\eps} \otimes u_{\eps} : \nabla \phi_{\eps} = & \, \int_{0}^{T}\int_{\Omega} \rho u \otimes u : (\nabla \psi \otimes \nabla \Delta^{-1}(\psi \rho) ) \label{2:2} \\
&\, +  \int_{0}^{T}\int_{\Omega}\fn_{\eps} \psi \rho_{\eps} u_{\eps} \otimes u_{\eps} : \nabla(\nabla \Delta^{-1}(\psi \rho_{\eps})) + c(\eps), \nonumber 
\end{align}
with $ c(\eps) \lra 0 $ as $ \eps \lra 0 $.
The next term is 
\begin{align*}
\int_{0}^{T}\int_{\Omega} \rho_{\eps}^{\gamma} \DIV(\phi_{\eps}) = & \, \int_{0}^{T}\int_{\Omega} \rho_{\eps}^{\gamma}  \nabla \psi \cdot \Phi_{\eps}[\nabla \Delta^{-1}(\psi \rho_{\eps})] \\
&\, +  \int_{0}^{T}\int_{\Omega}\fn_{\eps} \psi^2  \rho_{\eps}^{\gamma} \rho^{\eps}   \\
&\, +  \int_{0}^{T}\int_{\Omega}  \psi \rho_{\eps}^{\gamma}\left(     \DIV(  \Phi_{\eps}[\nabla \Delta^{-1}(\psi \rho_{\eps}))] - \fn_{\eps} \DIV( \nabla \Delta^{-1}(\psi \rho_{\eps}) ) \right).
\end{align*}
Following the idea of the proof of \eqref{strategy:1}, we deduce that the first term converges to 
\begin{equation*}
\int_{0}^{T}\int_{\Omega} \overline{\rho^{\gamma}}  \nabla \psi \cdot \nabla \Delta^{-1}(\psi \rho),
\end{equation*}
and the last term converges to zero from Lemma \eqref{lem:est:tf}. We deduce that 
\begin{align}
\int_{0}^{T}\int_{\Omega} \rho_{\eps}^{\gamma} \DIV(\phi_{\eps}) = & \, \int_{0}^{T}\int_{\Omega} \overline{\rho^{\gamma}}  \nabla \psi \cdot \nabla \Delta^{-1}(\psi \rho) \label{3:3} \\
&\, +  \int_{0}^{T}\int_{\Omega}\fn_{\eps} \psi^2   \rho_{\eps}^{\gamma} \rho^{\eps} + c(\eps), \nonumber
\end{align}
with $ c(\eps) \lra 0 $ as $ \eps \lra 0 $.
Finally 
\begin{align*}
\int_{0}^{T}\int_{\Omega} \bbS u_{\eps} : \nabla \phi_{\eps} = & \, \int_{0}^{T}\int_{\Omega} \bbS u_{\eps} : (\nabla \psi \otimes \Phi_{\eps}[\nabla \Delta^{-1}(\psi \rho_{\eps})]) \\
&\, +  \int_{0}^{T}\int_{\Omega}\fn_{\eps} \bbS u_{\eps} : D (\nabla \Delta^{-1}(\psi \rho_{\eps}))  \\
&\, +  \int_{0}^{T}\int_{\Omega}  \psi \rho_{\eps} u_{\eps} \otimes u_{\eps} :( \nabla \Phi_{\eps} -\fn_{\eps} \nabla )[\nabla \Delta^{-1}(\psi \rho_{\eps})].
\end{align*}
Using the strategy for \eqref{strategy:1} and Lemma \ref{lem:est:tf}, we have
\begin{align}
\int_{0}^{T}\int_{\Omega} \bbS u_{\eps} : \nabla \phi_{\eps} = & \, \int_{0}^{T}\int_{\Omega} \bbS u : (\nabla \psi \otimes \nabla \Delta^{-1}(\psi \rho) ) \label{4:4} \\
&\, +  \int_{0}^{T}\int_{\Omega}\fn_{\eps} \psi \bbS u_{\eps} : \nabla(\nabla \Delta^{-1}(\psi \rho_{\eps})) + c(\eps), \nonumber 
\end{align}
with $ c(\eps) \lra 0 $ as $ \eps \lra 0 $.
Using \eqref{1:1}-\eqref{2:2}-\eqref{3:3}-\eqref{4:4}, we rewrite \eqref{5:5}
\begin{align}
\lim_{\eps \to 0 }   \int_{0}^{T}\int_{\Omega} & \Bigg( -\psi  \fn_{\eps} \rho_{\eps} u_{\eps} \cdot \nabla \Delta^{-1}(\DIV(\psi \rho_{\eps} u_{\eps})) + \fn_{\eps} \psi \rho_{\eps} u_{\eps} \otimes u_{\eps} : \nabla(\nabla \Delta^{-1}(\psi \rho_{\eps})) \nonumber  \\
& \, \, \, \,  +\fn_{\eps} \psi^2   \rho_{\eps}^{\gamma} \rho^{\eps} - \fn_{\eps} \psi \bbS u_{\eps} : D(\nabla \Delta^{-1}(\psi \rho_{\eps})) \Bigg) \label{6:6}\\
= \int_{0}^{T}\int_{\Omega}&  \Bigg(- \psi \rho u \cdot \nabla\Delta^{-1}[\DIV(\psi \rho u )] +\psi \rho u \otimes u : \nabla^2 \Delta^{-1}[\psi \rho] \nonumber \\
& \, \, \, \,  + \psi^{2} \overline{\rho^{\gamma}} \rho - \psi \mathbb{S}u:  D \nabla \Delta^{-1}[\psi \rho]\Bigg). \nonumber 
\end{align}
We will now show that 
\begin{align}
\label{7:7}
\lim_{\eps \to 0 }   \int_{0}^{T}\int_{\Omega} &  -\psi  \fn_{\eps} \rho_{\eps} u_{\eps} \cdot \nabla \Delta^{-1}(\DIV(\psi \rho_{\eps} u_{\eps})) + \fn_{\eps} \psi \rho_{\eps} u_{\eps} \otimes u_{\eps} : \nabla(\nabla \Delta^{-1}(\psi \rho_{\eps})) \\
= & \, \int_{0}^{T}\int_{\Omega}- \psi \rho u \cdot \nabla\Delta^{-1}[\DIV(\psi \rho u )] +\psi \rho u \otimes u : \nabla^2 \Delta^{-1}[\psi \rho]. \nonumber
\end{align}
Using the strategy used in \eqref{strategy:1}, it is enough to show that
\begin{align*}
\lim_{\eps \to 0 }   \int_{0}^{T}\int_{\Omega} &  -\psi  \Phi_{\eps}[ \rho_{\eps} u_{\eps}] \cdot \nabla \Delta^{-1}(\DIV(\psi \rho_{\eps} u_{\eps})) +  \psi \Phi_{\eps}[\rho_{\eps} u_{\eps}] \otimes u_{\eps} : \nabla(\nabla \Delta^{-1}(\psi \rho_{\eps})) \\
= & \, \int_{0}^{T}\int_{\Omega}- \psi \rho u \cdot \nabla\Delta^{-1}[\DIV(\psi \rho u )] +\psi \rho u \otimes u : \nabla^2 \Delta^{-1}[\psi \rho].
\end{align*}
In the case $ \gamma > 2 $, this equality can be verified by using the commutator estimates from Step 3 of proof of Theorem 5.1 of \cite{Lions2}.

Finally notice that 
\begin{align*}
\int_{0}^{T} \int_{\Omega}\fn_{\eps} \psi \bbS u_{\eps}: \nabla^{2} \Delta^{-1}(\psi \rho^{\eps}) =  & \, \int_{0}^{T} \int_{\Omega}\fn_{\eps} \psi \mu D u_{\eps}: D \nabla \Delta^{-1}(\psi \rho^{\eps}) \\ 
& \, + \int_{0}^{T} \int_{\Omega}\fn_{\eps} \psi^2 (\mu +\lambda) \DIV( u_{\eps}) \rho^{\eps}.
\end{align*}
From some integrations by parts and using the density of smooth functions in Sobolev spaces, we have 
\begin{align*}
\int_{0}^{T} & \int_{\Omega}\fn_{\eps} \psi \mu D u_{\eps}: D \nabla \Delta^{-1}(\psi \rho^{\eps}) -  \int_{0}^{T} \int_{\Omega}\fn_{\eps} \psi^2  \mu \DIV( u_{\eps}) \rho^{\eps} \\ & \, =  \int_{0}^{T} \int_{\Omega} \psi \mu D u: D \nabla \Delta^{-1}(\psi \rho) -  \int_{0}^{T} \int_{\Omega} \psi^2  \mu \DIV( u) \rho + c(\eps)
\end{align*}
with $ c(\eps) \lra 0 $ as $ \eps \lra 0 $.
We deduce that
\begin{align}
\label{8:8}
\int_{0}^{T} & \int_{\Omega}\fn_{\eps} \psi \bbS u_{\eps}: \nabla^{2} \Delta^{-1}(\psi \rho^{\eps}) = \int_{0}^{T} \int_{\Omega}\fn_{\eps} \psi^2 (2\mu +\lambda) \DIV( u_{\eps}) \rho^{\eps} \\
& - \int_{0}^{T} \int_{\Omega} \psi \mu D u: D \nabla \Delta^{-1}(\psi \rho) + \int_{0}^{T} \int_{\Omega} \psi^2  \mu \DIV( u) \rho + c(\eps) .\nonumber 
\end{align}

The statement of the Lemma follows from \eqref{6:6}-\eqref{7:7} and \eqref{8:8}.

\end{proof}

\appendix

\section{The Bogovski\u{\i} operator in domains with holes}

In this appendix we recall a definition of Bogovski\u{\i} operator for domains with a hole. Moreover we show estimates independent of the size of the hole when it is assume to be small enough.

Let recall that a Bogovski\u{\i} operator is a left inverse of the divergence on $ \tilde{L}^{p} $ which is the space of $ L^{p} $ functions with integral zero. Due to the non-uniqueness of this operator, we choose $ \B_{\Omega} $ to satisfy the following extra property.

\begin{Theorem}

There exists a Bogovski\u{\i} operator $ \B_{\Omega} $ such that 
\begin{equation*}
 \B_{\Omega}: \tilde{L^{p}} \lra W^{1,p}_{0}(\Omega) 
\end{equation*}
and it is linear and continuous for any $  1 < p < +\infty$,
\begin{equation*}
\DIV(B_{\Omega}[f]) = f \text{ for any } f \in \tilde{L}^p(\Omega) \quad \text{ and } \quad  \|B_{\Omega}[f]\|_{L^{\infty}(\Omega)} \leq \|f\|_{L^2(\Omega)}.
\end{equation*}
Moreover for any vector field $ F \in L^{p}(\Omega) $ such that $ F\cdot n = 0 $ on $ \partial \Omega $, it holds 
\begin{equation*}
\|\B[\DIV(F)]\|_{L^{p}(\Omega)} \leq \|F\|_{L^p(\Omega)}.
\end{equation*}

\end{Theorem}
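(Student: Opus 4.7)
The proof follows the classical Bogovski\u{\i} construction together with additional kernel estimates, and I will treat $\Omega$ as a bounded Lipschitz domain (the adaptation to perforated domains $\calF_\eps$ is outlined at the end). The plan is as follows. First, build the operator on a single sub-domain star-shaped with respect to a ball via Bogovski\u{\i}'s explicit formula $\B[f](x) = \int K(x,y)f(y)\,dy$, with a standard kernel of the form $K(x,y) = (x-y)N(x,y)/|x-y|^d$ where $N$ is smooth with compact support built from a fixed bump supported in the ball of star-shape. Then patch the local operators on a finite covering of $\Omega$ by star-shaped sub-domains using a partition of unity, exactly as in Galdi's monograph (Chapter III). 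This yields a linear operator $\B_\Omega: \tilde L^p(\Omega) \to W^{1,p}_0(\Omega)$ satisfying $\DIV \B_\Omega[f]=f$, together with the $L^p\to W^{1,p}$ continuity for every $1<p<\infty$.

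Next, for the $L^\infty$ bound I would estimate pointwise using H\"older's inequality: in dimension two the kernel $K(x,\cdot)$ behaves like $|x-y|^{-1}$, hence belongs to $L^q_{\text{loc}}$ for every $q<2$. Pairing with $f\in L^{q'}$ for $q'>2$ gives an $L^\infty$ bound directly; the $L^2$-version stated in the theorem would then be recovered either by choosing the kernel to be slightly more regular (through a mild mollification) or by interpolation with the $L^{q'}\to L^\infty$ bound for $q'>2$ combined with the fact that $W^{1,q'}_0\hookrightarrow L^\infty$ in dimension two.

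For the improved estimate involving the divergence, the argument is based on integration by parts inside the kernel representation. Writing $\B_\Omega[\DIV F](x)=\int K(x,y)\,(\DIV F)(y)\,dy$ and integrating by parts in the $y$-variable, the boundary contribution vanishes because $F\cdot n=0$ on $\partial\Omega$, leaving $\B_\Omega[\DIV F](x)=-\int \nabla_y K(x,y)\cdot F(y)\,dy$. The resulting kernel $\nabla_y K(x,y)$ is homogeneous of degree $-d$ in $x-y$ and has vanishing mean on spheres, so it defines a Calder\'on--Zygmund singular integral, which is bounded on $L^p(\Omega)$ for every $1<p<\infty$; this yields the required $L^p\to L^p$ bound.

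The main obstacle, if the estimates are to be used with constants uniform in a vanishing hole, is that a naive star-shaped decomposition of $\calF_\eps$ becomes singular as $\eps \to 0$. The clean workaround is to construct $\B_{\calF_\eps}$ as follows: given $f\in\tilde L^p(\calF_\eps)$, extend $f$ by zero to all of $\Omega$ (its mean over $\Omega$ stays zero), apply $\B_\Omega$ on the unperforated domain to obtain $v$ with $\DIV v=f$ in $\Omega$ and $v=0$ on $\partial\Omega$, and finally subtract a compactly supported divergence-free correction localised in a thin annulus around $\calS_\eps$ that kills the trace of $v$ on $\partial\calS_\eps$. Using the smallness of $\calS_\eps$ together with scale-invariance of the Bogovski\u{\i} operator on that annulus, this correction is controlled uniformly in $\eps$.
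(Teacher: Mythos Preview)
The paper does not prove this theorem at all: it simply writes ``We refer to subsection~3.3.1.2 of~\cite{NS} for a proof of the above theorem and for more details.'' So there is no paper proof to compare against; your sketch is the only argument on the table.

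Your outline for the $L^p\to W^{1,p}_0$ boundedness via Bogovski\u{\i}'s kernel on star-shaped pieces and a partition-of-unity gluing is the standard construction (Galdi, Novotn\'y--Stra\v{s}kraba), and your treatment of the negative-order estimate $\|\B_\Omega[\DIV F]\|_{L^p}\lesssim\|F\|_{L^p}$ via integration by parts in $y$ and a Calder\'on--Zygmund bound on the resulting singular kernel is also the correct route. Those parts are fine.

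The genuine gap is the $L^2\to L^\infty$ bound. In dimension two the Bogovski\u{\i} kernel obeys $|K(x,y)|\lesssim |x-y|^{-1}$, which is \emph{not} in $L^2_{\mathrm{loc}}$; H\"older against $f\in L^2$ gives a logarithmically divergent integral, exactly as you implicitly acknowledge. Your proposed fixes do not close the gap: ``mild mollification'' of the kernel destroys the exact identity $\DIV\B_\Omega[f]=f$, and your interpolation argument cannot work because the endpoint you would need, $W^{1,2}_0\hookrightarrow L^\infty$, is false in dimension two. If the $L^2\to L^\infty$ estimate holds, it must come from a finer structural property of the Bogovski\u{\i} kernel than the crude pointwise bound $|x-y|^{-1}$, and you have not identified one. (It is also possible that the paper's statement is slightly informal here and what is really used downstream is $L^{2+\delta}\to L^\infty$, which your argument does give; but as written you have not proved the stated bound.)

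Finally, your last paragraph about the perforated domain $\calF_\eps$ is not part of the theorem in question; that is the content of the subsequent Proposition in the paper. Your proposed construction there (extend by zero, apply $\B_\Omega$, then correct near the hole) is in fact exactly what the paper does via the composition $\B_{\eps}=\calR_\eps\circ\B_\Omega\circ\calE_\eps$.
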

We refer to subsection 3.3.1.2 of \cite{NS} for a proof of the above theorem and for more details.

To define the Bogovski\u{\i} operator on the domain with hole $ \Omega \setminus B_{\eps}(0) $ we use an idea from \cite{lu:sch}, more precisely we define $ \B_{\Omega \setminus B_{\eps}(0)} $ as the composition of three operators. The extension by zero operator $ \calE_{\eps}: \tilde{L^{p}}(\Omega \setminus B_{\eps}(0))  \lra \tilde{L}^{p}(\Omega) $, the Bogovski\u{\i} operator on $ \Omega $ and the restriction operator $ \calR_{\eps} : W^{1,p}_{0}(\Omega) \lra W^{1,p}_{0}(\Omega \setminus B_{\eps}(0)) $ which is defined as follows.

Let $ \eta: [0,+\infty) \lra [0,1] $ an increasing smooth function such that $ \eta(x) = 0 $ for $ x \in [0,1] $ and $ \eta(x) = 1$ for $ x \in [2,+\infty) $ and let $ B_1 = \B_{B_{2}(0)\setminus B_{1}(0)} $ a  Bogovski\u{\i} operator.  For $ \eps > 0 $ let introduce the functions $ \eta_{\eps}(x) = \eta(x/\eps) $ and similarly $ B_{\eps}[f](x) = \eps B_1[f(\eps y)](x/\eps) $. We define the restriction operator 
\begin{equation*}
\calR_{\eps}[F] = \eta_{\eps} F +  B_{\eps}[\DIV(1-\eta)F)- \ll \DIV((1-\eta)F) \gg] ,
\end{equation*}
where
\begin{equation*}
 \ll  f \gg = \frac{1}{|\B_{2 \eps}(0) \setminus B_{\eps}(0)|} \int_{\B_{2 \eps}(0) \setminus B_{\eps}(0)} f.
\end{equation*}
We can define the the Bogovski\u{\i} operator on the domain with hole $ \Omega \setminus B_{\eps}(0) $.
\begin{equation}
\label{Bog:op:eps}
\B_{\Omega \setminus B_{\eps}(0)}[f] = \B_{\eps}[f] = \calR_{\eps} \circ \B_{\Omega} \circ \calE_{\eps}[f] .
\end{equation}
Moreover they satisfy the following estimates uniformly in $ \eps $.  

\begin{Proposition}

The operators $ B_{\eps} $ defined in \eqref{Bog:op:eps} are Bogovski\u{\i} operators, moreover for $ 1 < p \leq 2 $ they satisfy the uniform bounds
\begin{equation*}
\| B_{\eps}[f] \|_{W^{1,p}_{0}(\Omega \setminus B_{\eps}(0))} \leq C \|f\|_{L^{p}(\Omega \setminus B_{\eps}(0))} \quad \text{ and } \quad  \|B_{\eps}[f]\|_{L^{\infty}(\Omega \setminus B_{\eps}(0))} \leq C \|f\|_{L^2(\Omega \setminus B_{\eps}(0))},
\end{equation*} 
 with $ C $ independent of $ \eps $.  For any vector field $ F \in L^{q}(\Omega \setminus B_{\eps}(0)) $ such that $ F\cdot n = 0 $ on $ \partial \Omega \cup \partial B_{\eps}(0) $, it holds 
\begin{equation}
\label{div:bog:est}
\|\B[\DIV(F)]\|_{L^{q}(\Omega \setminus B_{\eps}(0))} \leq \|F\|_{L^q(\Omega \setminus B_{\eps}(0))}.
\end{equation}
for any $ 1 < q < + \infty $.

\end{Proposition}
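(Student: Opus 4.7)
For $f \in \tilde{L}^p(\Omega\setminus B_\eps(0))$, set $v := \B_\Omega[\calE_\eps f] \in W^{1,p}_0(\Omega)$ so that $\B_\eps[f] = \calR_\eps[v]$. The plan is to reduce each claim to the known properties of $\B_\Omega$ on the full domain, together with the scale invariance of the reference Bogovski\u{\i} $B_1$ on the unit annulus $B_2(0)\setminus B_1(0)$. For the mapping property, a direct computation gives
\begin{equation*}
\DIV(\calR_\eps[v]) \;=\; \DIV(v) \;-\; \ll \DIV((1-\eta_\eps)v)\gg.
\end{equation*}
Since $(1-\eta_\eps)v$ is supported in $\overline{B_{2\eps}(0)}\subset \Omega$, the divergence theorem forces $\int\DIV((1-\eta_\eps) v)\,dx = 0$, so the mean drops out and $\DIV(\B_\eps[f]) = \calE_\eps f = f$ on $\Omega\setminus B_\eps(0)$. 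The homogeneous boundary conditions follow because $v$ vanishes on $\partial\Omega$, $\eta_\eps v$ vanishes near $\partial B_\eps(0)$, and the local correction lives in the closed annulus and vanishes on its two circles.

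For the uniform $W^{1,p}$ bound with $1<p\leq 2$, I would use the splitting $\B_\eps[f] = \eta_\eps v + B_\eps[\DIV((1-\eta_\eps)v)]$ (the mean-value correction vanishes by the argument above). The product piece is controlled by $\|\eta_\eps v\|_{W^{1,p}} \leq \|v\|_{W^{1,p}} + \|\nabla\eta_\eps\|_{L^\infty}\|v\|_{L^p(B_{2\eps})}$, and the Gagliardo--Nirenberg--Sobolev embedding in dimension two gives $\|v\|_{L^p(B_{2\eps})}\leq C\eps\,\|v\|_{W^{1,p}}$ for $p\leq 2$, exactly absorbing the $1/\eps$ blow-up of $\|\nabla\eta_\eps\|_{L^\infty}$. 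For the correction piece, set $F:=(1-\eta_\eps)v$, whose normal trace vanishes on both $\partial B_\eps(0)$ and $\partial B_{2\eps}(0)$; transferring the $\DIV$-form estimate for $B_1$ by scaling, and using that $\|\nabla B_\eps[\cdot]\|_{L^p}$ is scale-invariant, yields $\|\nabla B_\eps[\DIV F]\|_{L^p}\leq C\|F\|_{L^p}$ uniformly. The $L^p$ bound on $B_\eps[\DIV F]$ itself follows with an extra $\eps$-gain from scaling. The general $\DIV$-form bound \eqref{div:bog:est} is handled by the same template: $\calE_\eps F$ inherits zero normal trace on $\partial \Omega$, so the hypothesis on $\B_\Omega$ gives $\|\B_\Omega[\DIV \calE_\eps F]\|_{L^q}\leq \|F\|_{L^q}$, with the local correction controlled as above. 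For the $L^\infty$ bound, $\|\eta_\eps v\|_{L^\infty}\leq \|v\|_{L^\infty}\leq C\|f\|_{L^2}$ by the hypothesis on $\B_\Omega$, and the correction satisfies $\|B_\eps[\DIV F]\|_{L^\infty}\leq C\|F\|_{L^2}\leq C\|f\|_{L^2}$ by the analogous property of $B_1$ together with the scale invariance of the $L^\infty$ norm.

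The main obstacle I anticipate is the tight scaling bookkeeping: the $1/\eps$ loss from differentiating $\eta_\eps$ must be exactly compensated by the $\eps$-gain from the critical Gagliardo--Nirenberg embedding on the small support. This balance is precisely what forces the restriction $p\leq 2$ in dimension two and makes the endpoint delicate; once the scaling is tracked, everything else reduces to linearity and the reference estimates on $B_2(0)\setminus B_1(0)$.
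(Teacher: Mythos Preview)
Your overall template is close to the paper's, but there is a concrete error that breaks the correction step. You assert that $F := (1-\eta_\eps)v$ has zero normal trace on both circles of the annulus. With the paper's convention ($\eta_\eps=0$ on $B_\eps$, $\eta_\eps=1$ outside $B_{2\eps}$) one has $(1-\eta_\eps)=1$ on $\partial B_\eps$, so $(1-\eta_\eps)v=v$ there, and nothing forces $v=\B_\Omega[\calE_\eps f]$ to satisfy $v\cdot n=0$ on $\partial B_\eps$. Hence the $\DIV$-form estimate for $B_1$ is not available on $(1-\eta_\eps)v$, and the inequality $\|\nabla B_\eps[\DIV F]\|_{L^p}\leq C\|F\|_{L^p}$ you write down is not a consequence of it (that would be one full derivative stronger than what a negative-order input allows). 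For the $W^{1,p}$ bound this is repairable by instead using the ordinary Bogovski\u{\i} estimate $\|\nabla B_\eps[h]\|_{L^p}\leq C\|h\|_{L^p}$ with $h=\DIV((1-\eta_\eps)v)\in L^p$ and then absorbing $\|\nabla\eta_\eps\|_{L^\infty}\|v\|_{L^p(A_\eps)}$ via the same Sobolev gain you already used. But for the $\DIV$-form bound \eqref{div:bog:est} with $F\in L^q$ only, $\DIV((1-\eta_\eps)v)$ is not in $L^q$ and your ``same template'' does not close.

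The paper fixes exactly this point by decomposing one step further. Writing $v=\B_\Omega[\DIV F]$ so that $\DIV v=\calE_\eps\DIV F$, one obtains on the annulus
\[
\DIV\bigl((1-\eta_\eps)v\bigr)\;=\;-\,\nabla\eta_\eps\cdot v\;+\;\DIV\bigl((1-\eta_\eps)F\bigr)\;+\;\nabla\eta_\eps\cdot F.
\]
Now the middle piece \emph{does} satisfy the normal-trace hypothesis (because $F\cdot n=0$ on $\partial B_\eps$ by assumption and $(1-\eta_\eps)=0$ on $\partial B_{2\eps}$), so the scale-invariant $\DIV$-form bound $\|B_\eps[\DIV G]\|_{L^q}\leq C\|G\|_{L^q}$ applies to it. The two remaining pieces are genuine $L^q$ functions; for them one uses the standard bound $\|\nabla B_\eps[g]\|_{L^q}\leq C\|g\|_{L^q}$ together with a Poincar\'e gain of order $\eps$ on $A_\eps$ (since $B_\eps[\cdot]\in W^{1,q}_0(A_\eps)$), which exactly cancels the $1/\eps$ from $\nabla\eta_\eps$. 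This finer splitting, bringing the original $F$ back into the correction, is the missing ingredient in your argument.
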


\begin{proof}

The proof follows from the definition of the operator $ \B_{\eps}$. Compared with the correspondent result in \cite{lu:sch} we notice that \eqref{div:bog:est} holds also for  $ 2 \leq q < + \infty $. So let show this result. By definition, we have
\begin{align*}
\B_{\eps}[\DIV(F)] = & \, \calR_{\eps} \circ \B_{\Omega}[\DIV(F)] = \eta_{\eps}\B_{\Omega}[\DIV(F)] + B_{\eps}[\DIV((1-\eta_{\eps}) \B_{\Omega}[\DIV(F)])] \\
= & \, \eta_{\eps}\B_{\Omega}[\DIV(F)] - B_{\eps}[\nabla \eta_{\eps} \cdot \B_{\Omega}[\DIV(F)] ] +  B_{\eps}[\DIV((1-\eta_{\eps})F)] +  B_{\eps}[\nabla \eta_{\eps} \cdot F ]
\end{align*}
We estimate the right hand side separately. It is straight-forward to see that
\begin{equation*} 
\|\eta_{\eps}\B_{\Omega}[\DIV(F)]\|_{L^{q}(\calF_{\eps})} \leq C \|F\|_{L^{q}(\F_{\eps})}
\end{equation*}
For the second term we denote by $ q^{*} = 2q/(2-q) $ and we notice that the support of $ B_{\eps} $ is contained in $ B_{2\eps}(0) \setminus B_{\eps}(0) = A_{\eps}$. Then
\begin{align*}
\| B_{\eps}&[\nabla \eta_{\eps} \cdot \B_{\Omega}[\DIV(F)] ]\|_{L^{q}(A_{\eps})} \leq C\eps \| B_{\eps}[\nabla \eta_{\eps} \cdot \B_{\Omega}[\DIV(F)] ]\|_{L^{q^*}(A_{\eps})} \\ 
\leq & \, C\eps \| \nabla B_{\eps}[\nabla \eta_{\eps} \cdot \B_{\Omega}[\DIV(F)] ]\|_{L^{q}(A_{\eps})} \leq C \eps \|[\nabla \eta_{\eps} \cdot \B_{\Omega}[\DIV(F)]\|_{L^{q}(A_{\eps})} \\
 \leq & \, C \eps \|\nabla \eta_{\eps}\|_{L^{\infty}(A_{\eps})} \|\B_{\Omega}[\DIV(F)]\|_{L^{q}(A_{\eps})} \leq \|F\|_{L^{q}(\calF_{\eps})}.
\end{align*}
The same strategy gives
\begin{equation*}
\| B_{\eps}[\nabla \eta_{\eps} \cdot F ]\|_{L^{q}(A_{\eps})} \leq C \|F\|_{L^{q}(\F_{\eps})}.
\end{equation*}
We are left to show the estimates for $  B_{\eps}[\DIV((1-\eta_{\eps})F)] $. To simplify the notation let $ G = (1-\eta_{\eps})F $. By definition of $ B_{\eps} $ we have
\begin{align*}
B_{\eps}[\DIV_x(G)](x) = \eps B_1[\DIV_x(G(\eps y))](x/\eps) = B_{1}[\DIV_y(G(\eps y))](x/\eps) 
\end{align*}
We deduce that 
\begin{equation*}
\|B_{\eps}[\DIV_x(G)](x)\|_{L^{q}(A_{\eps})}= \eps^{2/q}\| B_{1}[\DIV_y(G(\eps y))]\|_{L^{q}(A_1)} \leq C  \eps^{2/q}\| G \|_{L^{q}(A_1)} = \|G\|_{L^{q}(A_{\eps})}.
\end{equation*}
After recalling that $ G = (1-\eta_{\eps})F  $ we obtain the desired result.

\end{proof}

\section{Improved pressure estimates}
\label{press:est:app}

This section is devoted to the proof of the improved pressure estimates from Proposition \ref{press:est:lem}.

\begin{proof}

The idea is to test the momentum equation of \eqref{CNS:equ} with 
\begin{equation}
\label{test:funct:impr}
\varphi_{\eps} = \phi \B_{\eps}[\psi_{\eps} \rho_{\eps}^{\theta} - \langle \psi_{\eps} \rho_{\eps}^{\theta} \rangle] ,
\end{equation}
where $ \varphi \in C^{\infty}_{c}([0,T)) $, $ \psi_{\eps} = (\tilde{\psi}_{\eps})^2$ with $ \tilde{\psi}_{\eps}(x) = 1 - \eta_{\eps}{|x|} $ and $$  \langle \psi_{\eps} \rho_{\eps}^{\theta} \rangle = \frac{1}{|\Omega \setminus B_{\eps}(0)|} \int_{\Omega \setminus B_{\eps}(0)}  \psi_{\eps} \rho_{\eps}^{\theta} .$$ 
The functions $ \varphi_{\eps}$ are not smooth enough in the time variable to be test functions in the weak formulations, so to be rigorous we should smooth them out by using a convolution kernel as in Section 7.9.5 of \cite{NS}. We will not consider this regularization here because it will not influence the estimates we are going to do. 

If we use \eqref{test:funct:impr} in the momentum equation of  \eqref{CNS:equ}, we deduce
\begin{align*}
\int_{0}^T \int_{\F_{\eps}} \phi \psi_{\eps} \rho_{\eps}^{\gamma+\theta} = & \, \int_{0}^T \int_{\F_{\eps}} \phi \rho^{\gamma} \langle \psi_{\eps} \rho_{\eps}^{\theta} \rangle -  \int_{\F_{\eps}} q_{\eps}^{in} \cdot \varphi_{\eps}(0,.) + 2 \mu \int_{0}^T \int_{\F_{\eps}} D u_{\eps} : D \varphi_{\eps} \\ &+ (\lambda + \mu) \int_{0}^T \int_{\F_{\eps}} \DIV(u_{\eps}) \DIV(\varphi_{\eps}) - \int_{0}^T \int_{\F_{\eps}} \rho_{\eps}u_{\eps} \otimes u_{\eps}: D \varphi_{\eps} \\ & - \int_{0}^T \int_{\F_{\eps}} \rho_{\eps} u_{\eps} \partial_t \varphi_{\eps} \\
= & \, \sum_{i=1}^6 I_i. 
\end{align*}
We will now show that the right hand side of the above expression is bounded by a constant independent of $ \eps $ multiply by the norm of the initial data.To do that we estimate the $ I_i $ separately. 

\begin{align*}
\left| I_1 \right| = \left|  \int_{0}^T \int_{\F_{\eps}} \phi \rho^{\gamma} \langle \psi_{\eps} \rho_{\eps}^{\theta} \rangle \right|  \leq C \|\rho_{\eps}\|_{L^{\infty}(0,T;L^{\gamma}(\F_{\eps}))} \|\rho_{\eps}\|_{L^{\infty}(0,T;L^{\gamma}(\F_{\eps}))}^{\theta} \leq C
\end{align*}
where we use $ \theta \leq \gamma $. Using the definition of $ \varphi_{\eps} $, we have

\begin{align*}
\left| I_2 \right| \leq & \,  \left|  \int_{\F_{\eps}} q_{\eps}^{in} \cdot \phi(0) \B_{\eps} \left[\psi_{\eps} (\rho_{\eps}^{in})^{\theta} - \langle \psi_{\eps} (\rho_{\eps}^{in})^{\theta} \rangle\right]  \right|  \\
\leq & \, \left\| \frac{q^{\in}_{\eps}}{\sqrt{\rho_{\eps}^{in}}}\right\|_{L^{2}(\F_{\eps})} \|\sqrt{\rho_{\eps}}\|_{L^{2\gamma}(\F_{\eps})}\|\B_{\eps}\left[(\psi_{\eps} \rho_{\eps}^{in})^{\theta} - \langle \psi_{\eps} (\rho_{\eps}^{in})^{\theta} \rangle\right]\|_{L^{2\gamma/(\gamma-1)}(\F_{\eps})} \\
\leq & C  \left\| \frac{q^{\in}_{\eps}}{\sqrt{\rho_{\eps}^{in}}}\right\|_{L^{2}(\F_{\eps})} \|\sqrt{\rho_{\eps}}\|_{L^{2\gamma}(\F_{\eps})}
\|(\psi_{\eps} \rho_{\eps}^{in})^{\theta} - \langle \psi_{\eps} (\rho_{\eps}^{in})^{\theta} \|_{L^{2\gamma/(2\gamma-1)}(\F_{\eps})} \\
\leq & \, C,
\end{align*}
for $ 2\theta \leq 2 \gamma -1$. In the third inequality we use that $$ \| \B_{\eps}[f]\|_{L^{p^*} } \leq \|\B_{\eps}[f]\|_{W^{1,p}} \leq \|f\|_{L^p } \quad \text{ for }  2 <  p^{*} = 2p/(2-p)  .$$
\begin{align*}
\left|I_3\right| \leq & \, \left| \mu \int_{0}^T \int_{\F_{\eps}}  D u_{\eps} : D \B_{\eps} \left[\phi \psi_{\eps} \rho_{\eps}^{\theta} - \langle \phi \psi_{\eps} \rho_{\eps}^{\theta} \rangle\right] \right| \\
\leq & \, C \| Du_{\eps} \|_{L^2(0,T;L^{2}(\F_{\eps}))}\|(\phi \psi_{\eps})^{1/\theta} \rho\|_{L^ {2\theta}(0,T;L^{2\theta}(\F_{\eps}))}^{1/2}.
\end{align*}
Notice that the last term of the right hand side can be absorb in the left hand side for any $ \theta \leq \gamma $.
\begin{align*}
\left|I_4 \right| \leq & \, \left| (\mu+\lambda) \int_{0}^T \int_{\F_{\eps}}  \DIV(u_{\eps}) \left[\phi \psi_{\eps} \rho_{\eps}^{\theta} - \langle \phi \psi_{\eps} \rho_{\eps}^{\theta} \rangle\right] \right| \\
\leq & \, C \|\DIV(u_{\eps}) \|_{L^2(0,T;L^{2}(\F_{\eps}))}\|(\phi \psi_{\eps})^{1/\theta} \rho\|_{L^ {2\theta}(0,T;L^{2\theta}(\F_{\eps}))}^{1/2}.
\end{align*}
As before we can absorb the last term of the right hand side in the left hand side if $ \theta \leq \gamma $. The next term is
\begin{align*}
\left|I_5 \right| \leq & \, \left| \int_{0}^T \int_{\F_{\eps}}  \rho_{\eps} u_{\eps} \otimes u_{\eps} : D \B_{\eps} \left[\phi \psi_{\eps} \rho_{\eps}^{\theta} - \langle \phi \psi_{\eps} \rho_{\eps}^{\theta} \rangle\right] \right| \\
\leq & \, \|\rho_{\eps}\|_{L^{\infty}(0,T; L^{\gamma}(\F_{\eps}))}\|u\|_{L^2(0,T;L^{\infty^-}(\calF_{\eps}))}^2\|\rho^{\theta}\|_{L^{\infty}(0,T;L^{\gamma/\theta}(\calF_{\eps}))}.
\end{align*}
in the above inequality we need $ \gamma/\theta \leq  2 $ for the estimates on $ D \B_{\eps} $, moreover from 
$$ \frac{1}{\gamma} + \frac{1}{\infty^-} +  \frac{1}{\infty^-} + \frac{\theta}{\gamma} \leq 1 $$  
we have the classical bound $ \theta < \gamma -1 $ and from $ \gamma/\theta \leq 2 $ we have also $ \gamma > 2 $.

We are now left with the estimates of $ I_6 $. Notice that for any fixed $ \eps $  we already know that $ \rho_{\eps} \in L^{q}((0,T)\times \F_{\eps}) $ for any $ q < 2\gamma -1 $. Lemma 6.9 of \cite{NS} ensures that $ \rho_{\eps} $ and $ u_{\eps} $ satisfy the equation
\begin{equation*}
\partial_t \rho_{\eps}^{\theta} + \DIV(u_{\eps} \rho^{\theta}_{\eps}) + \DIV(u_{\eps})(\theta-1)\rho^{\theta}= 0,
\end{equation*}
in a distributional sense for any $ \theta < \gamma -1/2 $. Using the equation we have that 
\begin{align*}
\partial_t(\varphi_{\eps}) = & \, \partial_t \phi \B_{\eps}\left[ \psi_{\eps} \rho_{\eps}^{\theta} - \langle  \psi_{\eps} \rho_{\eps}^{\theta} \rangle\right] +  \phi \B_{\eps}\left[ \psi_{\eps} \partial_t \rho_{\eps}^{\theta} - \langle   \psi_{\eps} \partial_t \rho_{\eps}^{\theta} \rangle\right] \\ 
= & \,  \partial_t \phi \B_{\eps}\left[ \psi_{\eps} \rho_{\eps}^{\theta} - \langle  \psi_{\eps} \rho_{\eps}^{\theta} \rangle\right]- \phi \B_{\eps}\left[ \DIV( \psi_{\eps} \cdot u_{\eps} \rho_{\eps}^{\theta} ) - \langle  \DIV( \psi_{\eps} \cdot u_{\eps} \rho_{\eps}^{\theta}) \rangle\right]  \\ & \, + \phi \B_{\eps}\left[ \nabla \psi_{\eps} \cdot u_{\eps}  \rho_{\eps}^{\theta} - \langle  \nabla \psi_{\eps} \cdot \cdot u_{\eps}  \rho_{\eps}^{\theta} \rangle\right] \\ & - \phi \B_{\eps}\left[  \psi_{\eps}\DIV( u_{\eps})(\theta-1) \rho_{\eps}^{\theta}  - \langle  \psi_{\eps} \DIV(  u_{\eps})(\theta-1) \rho_{\eps}^{\theta} \rangle\right] \\
= & \, \sum_{j = 1}^{4} J_{j}.
\end{align*}
We can now estimate 
\begin{equation*}
\left|I_{6}\right| \leq \sum_{j=1}^{4} \left|\int_{0}^{T}\int_{\F_{\eps}} \rho_{\eps}u_{\eps} \cdot J_{j}  \right| \leq \|\rho_{\eps}\|_{L^{\infty}(0,T;L^{\gamma}(\F_{\eps}))}\|u_{\eps}\|_{L^{2}(0,T; L^{\infty^-}(\F_{\eps}))} \|J_j\|_{L^{2}(0,T; L^{(\gamma/\theta)-}(\F_{\eps}))}. 
\end{equation*}
We are left with the estimates of $  J_j $. Notice that
\begin{equation*}
 \|J_1\|_{L^{2}(0,T; L^{(\gamma/\theta)-}(\F_{\eps}))} \leq C \| \partial_t \phi \|_{L^{2}(0,T)} \|\rho_{\eps}^{\theta}\|_{L^{\infty}(0,T;L^{\gamma/\theta}(\F_{\eps}))}
\end{equation*}
Then we have
\begin{align*}
 \|J_2\|_{L^{2}(0,T; L^{(\gamma/\theta)-}(\F_{\eps}))} \leq & \,   C \| u_{\eps} \rho_{\eps}^{\theta}\|_{L^{\infty}(0,T;L^{(\gamma/\theta)-}(\F_{\eps}))} 
 \leq & \, \| u_{\eps}^{\theta}\|_{L^{2}(0,T;L^{\infty^-}(\F_{\eps}))} \|\rho_{\eps}^{\theta}\|_{L^{\infty}(0,T;L^{\gamma/\theta}(\F_{\eps}))}
\end{align*}

Using that 
$$ \left(\left( \frac{2\gamma}{\gamma+ 2\theta }\right)^- \right)^* = \left(\frac{\gamma}{\theta}\right)^{-} \quad \text{ for } \gamma > 2\theta ,$$
we have
\begin{align*}
\|J_3\|_{L^{2}(0,T; L^{(\gamma/\theta)-}(\F_{\eps}))} \leq & C \| \DIV( u_{\eps}) \psi_{\eps}^{\theta} \|_{L^{2}(0,T;L^{2\gamma/(\gamma + 2\theta)}(\F_{\eps})} \\
\leq & \, C \|u_{\eps}\|_{L^{2}(0,T;L^{2}(\F_{\eps}))} \|\rho_{\eps}^{\theta}\|_{L^{\infty}(0,T;L^{\gamma/\theta}(\F_{\eps}))}
\end{align*}

and similarly
\begin{align*}
\|J_4\|_{L^{2}(0,T; L^{(\gamma/\theta)-}(\F_{\eps}))} \leq & C \| \nabla\psi_{\eps} u_{\eps} \psi_{\eps}^{\theta} \|_{L^{2}(0,T;L^{(2\gamma/(\gamma + 2\theta)^-}(\F_{\eps})} \\
\leq & \, C\| \nabla \psi_{\eps} \|_{L^{2}(\F_{\eps})} \|u_{\eps}\|_{L^{2}(0,T;L^{\infty^-}(\F_{\eps}))} \|\rho_{\eps}^{\theta}\|_{L^{\infty}(0,T;L^{\gamma/\theta}(\F_{\eps}))}
\end{align*}

For $ \theta = \gamma/2 $, we use a different estimate. 
First of all notice that from interpolation we have
\begin{equation*}
\|\rho_{\eps} u_{\eps}\|_{L^{6}(0,T; L^{(3/2)^+}(\F_{\eps}))} \leq \|\rho_ {\eps} u_{\eps}\|_{L^{2}(0,T;L^{\gamma^-}(\F_{\eps}))}^{1/3} \|\rho_{\eps} u_{\eps}\|_{L^{\infty}(0,T;L^{2\gamma/(\gamma+1)})}^{2/3}
\end{equation*}
under the hypothesis $ \gamma > 2 $. For $ j = 3, 4 $, we have
\begin{equation*}
\left| \int_0^{T} \int_{\F_{\eps}} \rho_{\eps} u_{\eps} \cdot J_{j} \right| \leq \|\rho_{\eps} u_{\eps} \|_{L^{6}(0,T; L^{(3/2)^+}(\F_{\eps}))}\|J_j\|_{L^{6/5}(0,T; L^{3^-}(\F_{\eps})}
\end{equation*}
We then estimate
\begin{align*}
\|J_3\|_{L^{6/5}(0,T; L^{3^-}(\F_{\eps})} \leq \| \DIV(u_{\eps}) \|_{L^{2}(0,T;L^{2}(\calF_{\eps})} \|\phi \psi_{\eps} \rho^{\gamma/2}_{\eps}\|_{L^{3}(0,T;L^{3}(\F_{\eps}))}
\end{align*}
in particular we can absorb the last term on the right hand side.

Recall that we assume $ \psi_{\eps} = \tilde{\psi}_{\eps}^2 $. Similarly
\begin{align*}
\|J_4\|_{L^{6/5}(0,T; L^{3^-}(\F_{\eps})} \leq & C \eps \| \nabla\psi_{\eps} u_{\eps} \psi_{\eps}^{\gamma/2} \|_{L^{2}(0,T;L^{2^-}(\F_{\eps})} \\
\leq & \, C\| \nabla \tilde{\psi}_{\eps} \|_{L^{2}(\F_{\eps})} \|u_{\eps}\|_{L^{2}(0,T;L^{\infty^-}(\F_{\eps}))} \|\phi \tilde{\psi}_{\eps} \rho_{\eps}^{\gamma/2}\|_{L^{3}(0,T;L^{3}(\F_{\eps}))}.
\end{align*}
 
\end{proof}

\section{Proof of Proposition \ref{prop:cut:off}}
\label{app:Jiao}

In this section we prove Proposition \ref{prop:cut:off} which is a straight-forward extension of Lemma 3 of \cite{He:2D}. First of all for $ A, B \in \R $ with $ 0 < A < B $, we denote by $ \alpha = B/ A > 1 $ and we define the functions
\begin{equation*}
f_{A,B}(z) = \begin{cases}
1 \quad & \text{ for } 0\leq z < A, \\
\frac{\log z - \log B }{\log A - \log B } \quad & \text{ for } A \leq z \leq B, \\
0 \quad & \text{ for } z > B.
\end{cases}
\end{equation*} 
It holds that $ f_{A,B} \in W^{1,\infty}(\R^{+}) $. We define the cut-off 
\begin{equation*}
\tilde{\eta}_{\eps, \alpha_{\eps}}(x) = f_{\eps, \alpha_{\eps} \eps }(|x|),  
\end{equation*} 
for $ x \in \bbR^2 $ and  $ \alpha_{\eps}  > 1 $.

\begin{Proposition}
\label{prop:cut:off}
Under the hypothesis that $  \alpha_{\eps} \leq |\log(\eps)| $ and $ \alpha_{\eps} \lra +\infty $, it holds

\begin{enumerate}

\item The functions $  1- \tilde{\eta}_{\eps, \alpha_{\eps}}   \longrightarrow 0 $  in $ L^{q}(\R^2) $ for $ 1 \leq q < +\infty $.

\item For $ 1\leq q < 2 $, $$ \left\| \nabla \tilde{\eta}_{\eps, \alpha_{\eps}} \right\|_{L^{q}(\R^2)}^q = \frac{2\pi}{2-q}  \frac{\alpha_{\eps}^{2-q}-1}{(\log \alpha_{\eps})^{q} } \eps^{2-q}. $$

\item We have $$ \left\| \nabla \tilde{\eta}_{\eps, \alpha_{\eps}} \right\|_{L^{2}(\R^2)}^2 = \frac{2\pi}{\log \alpha_{\eps} }. $$

\item For  $  2 <  q < +\infty $, for $ i = 1, 2 $,  $$ \left\| \nabla \tilde{\eta}_{\eps, \alpha_{\eps}} \right\|_{L^{q}(\R^2)}^q + \left\| \nabla^2 \tilde{\eta}_{\eps, \alpha_{\eps}} x_i \right\|_{L^{q}(B_{\eps \alpha_{\eps}}(0))}^q = \frac{2\pi}{q-2}  \frac{1}{(\log \alpha_{\eps})^{q} }\frac{\eps^2}{ \eps^{q}} \left(1-\frac{1}{(\alpha_{\eps})^{q-2}} \right). $$

\end{enumerate}
In particular 
 $$ \left\| \nabla \tilde{\eta}_{\eps, \alpha_{\eps}} \right\|_{L^{q}(\R^2)} \lra 0 \quad \text{ for } 1 \leq q \leq 2 $$
and 
$$  \eps \alpha_{\eps} \left\| \nabla \tilde{\eta}_{\eps, \alpha_{\eps}} \right\|_{L^{q}(\R^2)}, \eps \alpha_{\eps} \left\| \nabla^2 \tilde{\eta}_{\eps, \alpha_{\eps}} x_i \right\|_{L^{q}(B_{\eps \alpha_{\eps}}(0))} \lra 0 \quad \text{ for }  2 <  q < +\infty.   $$

\end{Proposition}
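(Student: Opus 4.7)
My approach is to exploit the fact that $\tilde\eta_{\eps,\alpha_\eps}$ is radial and piecewise smooth, so that every quantity in the statement reduces to a single elementary integral in one radial variable. Writing $L := \log\alpha_\eps$ and $A_\eps := B_{\alpha_\eps\eps}(0)\setminus\overline{B_\eps(0)}$, the explicit formula for $f_{\eps,\alpha_\eps\eps}$ gives
\begin{equation*}
\nabla \tilde\eta_{\eps,\alpha_\eps}(x) = -\frac{1}{L}\,\frac{x}{|x|^2}\,\mathbf{1}_{A_\eps}(x),
\end{equation*}
so that $|\nabla\tilde\eta_{\eps,\alpha_\eps}(x)|=1/(L|x|)$ on $A_\eps$. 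Passing to polar coordinates,
\begin{equation*}
\|\nabla \tilde\eta_{\eps,\alpha_\eps}\|_{L^q(\R^2)}^q = \frac{2\pi}{L^q}\int_\eps^{\alpha_\eps\eps} r^{1-q}\,dr,
\end{equation*}
and evaluating this one-variable integral in the three regimes $q<2$, $q=2$, $q>2$ produces items (2), (3), (4) at once. Item (1) is then immediate from $|\tilde\eta_{\eps,\alpha_\eps}|\leq \mathbf{1}_{B_{\alpha_\eps\eps}(0)}$, which gives $\|\tilde\eta_{\eps,\alpha_\eps}\|_{L^q(\R^2)}^q\leq \pi(\eps\alpha_\eps)^2$. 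I read the displayed ``$1-\tilde\eta_{\eps,\alpha_\eps}\to 0$'' as a typo for $\tilde\eta_{\eps,\alpha_\eps}\to 0$, since $1-\tilde\eta_{\eps,\alpha_\eps}$ equals $1$ outside $B_{\alpha_\eps\eps}(0)$ and cannot decay on all of $\R^2$.

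For the Hessian contribution I would differentiate once more classically on $A_\eps$,
\begin{equation*}
\partial_j\partial_i \tilde\eta_{\eps,\alpha_\eps}(x) = \frac{1}{L}\,\frac{2 x_i x_j - \delta_{ij}|x|^2}{|x|^4},\qquad x\in A_\eps,
\end{equation*}
from which $|\nabla^2\tilde\eta_{\eps,\alpha_\eps}(x)\,x_i|\leq C/(L|x|)$ on $A_\eps$. The $L^q$ norm of $\nabla^2\tilde\eta_{\eps,\alpha_\eps} x_i$ is therefore again controlled by the polar integral above (up to an angular constant), so the same elementary evaluation produces the claimed rate for the second summand in item (4).

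The convergences at the end then follow by inserting the assumptions $\alpha_\eps\leq |\log\eps|$ (so $\eps\alpha_\eps\to 0$) and $\alpha_\eps\to\infty$ (so $L\to\infty$) into the explicit formulas: the $q=2$ rate $2\pi/L$ trivially tends to $0$; for $1\leq q<2$ one estimates $\eps^{2-q}(\alpha_\eps^{2-q}-1)/L^q\leq (\eps\alpha_\eps)^{2-q}/L^q$, which vanishes; and for $q>2$ multiplying the $L^q$-norm by $\eps\alpha_\eps$ reduces things to $(\eps\alpha_\eps)^2\alpha_\eps^{q-2}/L^q$, which tends to zero under the same hypothesis.

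The only real subtlety will be that $\tilde\eta_{\eps,\alpha_\eps}$ is merely Lipschitz, so its distributional Hessian carries singular parts on the spheres $\{|x|=\eps\}$ and $\{|x|=\alpha_\eps\eps\}$; I interpret $\nabla^2\tilde\eta_{\eps,\alpha_\eps}$ throughout as its classical restriction to the open annulus $A_\eps$. To then obtain the smooth cut-off $\eta_{\eps,\alpha_\eps}$ used in the main text from this Lipschitz object, I would apply a radial mollification acting on scales much smaller than both $\eps$ and $\alpha_\eps\eps$; this affects only thin neighbourhoods of the two circles and therefore does not alter the asymptotic rates.
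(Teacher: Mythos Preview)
Your approach is essentially identical to the paper's: both reduce everything to the radial integral $\frac{2\pi}{L^q}\int_\eps^{\alpha_\eps\eps} r^{1-q}\,dr$ and evaluate it in the three regimes, with the paper's proof consisting of little more than the sentence ``after passing to radial coordinates the proof is straightforward'' together with the $q=2$ computation. Your observations that item (1) is a typo for $\tilde\eta_{\eps,\alpha_\eps}\to 0$ and that the Hessian must be read classically on the open annulus are correct; the paper's route to the smooth $\eta_{\eps,\alpha_\eps}$ is not by mollification but by an explicit interpolation with a fixed profile $g$ near the two circles, though this lies outside the present proposition.
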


\begin{proof}
After passing to radial coordinates the proof is straight-forward. For example to show part $ 3. $, we compute
\begin{align*}
 \left\| \nabla \tilde{\eta}_{\eps, \alpha_{\eps}} \right\|_{L^2(\bbR^2)}^2 = & \, \int_0^{2\pi} \int_{\eps}^{\eps \alpha_{\eps}} \left| \frac{1}{r} \frac{1}{\log (\eps) - \log(\eps \alpha_{\eps})}\right|^2  r  \, dr d\theta  \\
= & \, \frac{2 \pi}{(\log(\alpha_{\eps}))^2}\left[\log(r)\right]_{\eps}^{\eps \alpha_{\eps}} \\
= & \, \frac{4 \pi}{\log(\alpha_{\eps})}.
\end{align*} 
\end{proof}

The cut-off $  \tilde{\eta}_{\eps, \alpha_{\eps}} $ satisfy all the bounds of Proposition \ref{prop:cut:off} but they are not smooth, in particular they are not $ C^2 $ on $ \partial B_{\eps}(0) \cup \partial B_{\eps \alpha_{\eps}}(0) $. To solve this issue we modify these functions as in $  \cite{He:2D} $.
 Let introduce a function $ g \in C^{\infty}_{c}([0,12/10)) $ such that $ 0 \leq  g \leq  1 $ and $ g (y) = 1  $ for $ y \in [0,11/10] $. Then we define 

\begin{equation}
\label{family:of:cut-off}
\eta_{\eps, \alpha_{\eps}}(x) = 1 +\left( 1 - g\left( \frac{|x|}{\eps}\right) \right) \left( \tilde{\eta}_{\eps, \alpha_{\eps}}(x) g \left(  \frac{13}{10} \frac{|x|}{\alpha_{\eps}{\eps}}\right) -1\right),
\end{equation}  
 which rewrites 
 \begin{equation*}
 \eta_{\eps, \alpha_{\eps}}(x) = \begin{cases} 1 & \quad \text{ for } \quad   |x| < \frac{11}{10}\eps, \\   
 1 +\left( 1 - g\left( \frac{|x|}{\eps}\right) \right) \left( \tilde{\eta}_{\eps, \alpha_{\eps}}(x)-1\right) & \quad \text{ for}  \quad   \frac{11}{10}\eps \leq |x| < \frac{12}{10}\eps, \\ 
 \tilde{\eta}_{\eps, \alpha_{\eps}}(x)  & \quad \text{ for }  \quad   \frac{12}{10}\eps \leq |x| < \frac{11}{13}\eps \alpha_{\eps}, \\
  \tilde{\eta}_{\eps, \alpha_{\eps}}(x) g \left(  \frac{13}{10} \frac{|x|}{\alpha_{\eps}{\eps}}\right) & \quad \text{ for }  \quad   \frac{11}{13}\eps \alpha_{\eps} \leq |x| < \frac{12}{13}\eps \alpha_{\eps}, \\
0 & \quad \text{ for }  \quad   | x | \geq   \frac{12}{13}\eps \alpha_{\eps}.
 \end{cases}
 \end{equation*}
The functions $  \eta_{\eps, \alpha_{\eps}} $ are smooth. It remains to show that they satisfy all the properties stated in Proposition \ref{prop:cut:off}.

\begin{proof}[Proof of Proposition \ref{prop:cut:off}] We verify that the family $ \eta_{\eps, \alpha_{\eps}} $ defined in \eqref{family:of:cut-off} satisfies all the properties stated in Proposition \ref{prop:cut:off}. First of all by definition $\eta_{\eps, \alpha_{\eps}}  \in C^{\infty}_c(B_{\eps \alpha_{\eps}}(0)) $, $ \eta_{\eps, \alpha_{\eps}}(x) = 1 $ for $ x \in B_{\eps}(0) $  and $ \| \eta_{\eps, \alpha_{\eps}} \|_{L^{\infty}} \leq 1 $. Let now bound the $ L^{q} $ norm of $ \nabla \eta_{\eps, \alpha_{\eps}} $. As in $  \cite{He:2D} $, we denote by
$$ g^1_{\eps}(x) = \left( 1 - g\left( \frac{|x|}{\eps}\right) \right), \quad g^2_{\eps}(x) =  g \left(  \frac{13}{10} \frac{|x|}{\alpha_{\eps}{\eps}}\right)$$
and by $ A_{r,R} = B_R(0) \setminus B_r(0) $ the annulus for $ 0 < r < R $. Finally we notice that
 $$ \| \tilde{\eta}_{\eps, \alpha_{\eps}} - 1 \|_{L^{\infty}\left(A_{\frac{11}{10}\eps, \frac{12}{10}\eps}\right)} = \left\|  \frac{\log (|x|/\eps)}{\log(\alpha_{\eps})}\right\|_{L^{\infty}\left(A_{\frac{11}{10}\eps, \frac{12}{10}\eps}\right)} \leq \frac{C}{\log(\alpha_{\eps})} $$
 and similarly
  $$ \| \tilde{\eta}_{\eps, \alpha_{\eps}} \|_{L^{\infty}\left(A_{\frac{11}{13}\eps \alpha_{\eps}, \frac{12}{13}\eps \alpha_{\eps} }\right)} = \left\|  \frac{\log (|x|/(\eps\alpha_{\eps}))}{\log(\alpha_{\eps})}\right\|_{L^{\infty}\left(A_{\frac{11}{13}\eps \alpha_{\eps}, \frac{12}{10}\eps \alpha_{\eps}}\right)} \leq \frac{C}{\log(\alpha_{\eps})}. $$
 
For $  1 \leq q < +\infty$, we estimate
\begin{align*}
 \| \nabla \eta_{\eps, \alpha_{\eps}} \|_{L^{q}(\R^2)} \leq & \, \| \nabla(g_{\eps}^1 (\tilde{\eta}_{\eps, \alpha_{\eps}}-1) )\|_{L^{q}\left(A_{\frac{11}{10}\eps, \frac{12}{10}\eps}\right)} + \| \nabla \tilde{\eta}_{\eps, \alpha_{\eps}} \|_{L^{q}\left(A_{\frac{12}{10}\eps, \frac{11}{13}\eps \alpha_{\eps}}\right)} \\
 &\,  +   \| \nabla(g_{\eps}^2 \tilde{\eta}_{\eps, \alpha_{\eps}}) \|_{L^{q}\left(A_{\frac{11}{13}\eps \alpha_{\eps}, \frac{12}{10}\eps \alpha_{\eps}}\right)} \\
 \leq & \, \| \nabla \tilde{\eta}_{\eps, \alpha_{\eps}}\|_{L^{q}(\R^2)} \left( \| g_{\eps}^1 \|_{L^{\infty}(\R^2)} + 1 + \| g_{\eps}^2 \|_{L^{\infty}(\R^2)}      \right)  \\
 & \,  \| \tilde{\eta}_{\eps, \alpha_{\eps}} - 1 \|_{L^{\infty}\left(A_{\frac{11}{10}\eps, \frac{12}{10}\eps}\right)} \|\nabla g_{\eps}^1\|_{L^{q}(\bbR^2)} + \| \tilde{\eta}_{\eps, \alpha_{\eps}} \|_{L^{\infty}\left(A_{\frac{11}{13}\eps \alpha_{\eps}, \frac{12}{13}\eps \alpha_{\eps} }\right)}\|\nabla g_{\eps}^2\|_{L^{q}(\bbR^2)} \\
 \leq & \, C \| \nabla \tilde{\eta}_{\eps, \alpha_{\eps}}\|_{L^{q}(\R^2)} + \frac{C}{\log(\alpha_{\eps})}( (\alpha_{\eps} \eps)^{(2-q)/q} + \eps^{(2-q)/q}),
 \end{align*}
 where we use that $ 1 - g_{\eps}^1 $ and $ g_{\eps}^2$  are appropriate rescaling of $ g $ to estimate the $ L^q$ norm of $ \nabla g_{\eps}^1 $ and $ \nabla g_{\eps}^2 $. The bounds of the $ L^q $ norm of $ \nabla \eta_{\eps, \alpha_{\eps}} $ follows from the above estimate and Proposition \ref{prop:cut:off}, after noticing that in the case $ q \leq 2 $ it holds $ (\alpha_{\eps} \eps)^{(2-q)/q} \geq \eps^{(2-q)/q} $,  while for $ q \geq 2 $  it holds $ (\alpha_{\eps} \eps)^{(2-q)/q} \leq  \eps^{(2-q)/q} $. This explain the slightly different bounds in point 3. and 4.  
 Similarly we estimate for $ q \neq 2 $
\begin{align*}
\|  |x|  \nabla \eta_{\eps, \alpha_{\eps}} \|_{L^{q}(\R^2)} \leq & \, \| |x| \nabla (g_{\eps}^1 (\tilde{\eta}_{\eps, \alpha_{\eps}}-1) )\|_{L^{q}\left(A_{\frac{11}{10}\eps, \frac{12}{10}\eps}\right)} + \| |x| \nabla \tilde{\eta}_{\eps, \alpha_{\eps}} \|_{L^{q}\left(A_{\frac{12}{10}\eps, \frac{11}{13}\eps \alpha_{\eps}}\right)} \\
 &\,  +   \| |x|\nabla (g_{\eps}^2 \tilde{\eta}_{\eps, \alpha_{\eps}}) \|_{L^{q}\left(A_{\frac{11}{13}\eps \alpha_{\eps}, \frac{12}{10}\eps \alpha_{\eps}}\right)} \\
\leq & \, \| |x| \nabla \tilde{\eta}_{\eps, \alpha_{\eps}}\|_{L^{q}(\R^2)} \left( \| g_{\eps}^1 \|_{L^{\infty}(\R^2)} + 1 + \| g_{\eps}^2 \|_{L^{\infty}(\R^2)}      \right)  \\
 & \,  + \| \tilde{\eta}_{\eps, \alpha_{\eps}} - 1 \|_{L^{\infty}\left(A_{\frac{11}{10}\eps, \frac{12}{10}\eps}\right)} \||x|\nabla g_{\eps}^1\|_{L^{q}(\bbR^2)} \\
 & + \| \tilde{\eta}_{\eps, \alpha_{\eps}} \|_{L^{\infty}\left(A_{\frac{11}{13}\eps \alpha_{\eps}, \frac{12}{13}\eps \alpha_{\eps} }\right)}\||x|\nabla g_{\eps}^2\|_{L^{q}(\bbR^2)} \\
 \leq & C\left(  \| |x| \nabla \tilde{\eta}_{\eps, \alpha_{\eps}}\|_{L^{q}(\R^2)} \right) +  \frac{C}{\log(\alpha_{\eps})}( (\alpha_{\eps} \eps)^{2/q} + \eps^{2/q}).
\end{align*}
 where we use that $ 1 - g_{\eps}^1 $ and $ g_{\eps}^2$  are appropriate rescaling of $ g $ to estimate the $ L^{\infty}$ norm of $ |x| \nabla g_{\eps}^1 $ and $ |x| \nabla g_{\eps}^2 $. Finally we estimate for $ q \neq 2 $
\begin{align*}
\|  |x|  \nabla^2 \eta_{\eps, \alpha_{\eps}} \|_{L^{q}(\R^2)} \leq & \, \| |x| \nabla^2(g_{\eps}^1 (\tilde{\eta}_{\eps, \alpha_{\eps}}-1) )\|_{L^{q}\left(A_{\frac{11}{10}\eps, \frac{12}{10}\eps}\right)} + \| |x| \nabla^2 \tilde{\eta}_{\eps, \alpha_{\eps}} \|_{L^{q}\left(A_{\frac{12}{10}\eps, \frac{11}{13}\eps \alpha_{\eps}}\right)} \\
 &\,  +   \| |x|\nabla^2 (g_{\eps}^2 \tilde{\eta}_{\eps, \alpha_{\eps}}) \|_{L^{q}\left(A_{\frac{11}{13}\eps \alpha_{\eps}, \frac{12}{10}\eps \alpha_{\eps}}\right)} \\
\leq & \, \| |x| \nabla^2 \tilde{\eta}_{\eps, \alpha_{\eps}}\|_{L^{q}(B_{\eps \alpha_{\eps}}(0))} \left( \| g_{\eps}^1 \|_{L^{\infty}(\R^2)} + 1 + \| g_{\eps}^2 \|_{L^{\infty}(\R^2)}      \right)  \\
 & \,+ \| \nabla \tilde{\eta}_{\eps, \alpha_{\eps}}\|_{L^{q}(\R^2)} \left( \| |x| \nabla g_{\eps}^1 \|_{L^{\infty}(\R^2)} + \| |x| \nabla g_{\eps}^2 \|_{L^{\infty}(\R^2)}      \right)  \\
 & \,  + \| \tilde{\eta}_{\eps, \alpha_{\eps}} - 1 \|_{L^{\infty}\left(A_{\frac{11}{10}\eps, \frac{12}{10}\eps}\right)} \||x|\nabla^2 g_{\eps}^1\|_{L^{q}(\bbR^2)} \\
 & + \| \tilde{\eta}_{\eps, \alpha_{\eps}} \|_{L^{\infty}\left(A_{\frac{11}{13}\eps \alpha_{\eps}, \frac{12}{13}\eps \alpha_{\eps} }\right)}\||x|\nabla^2 g_{\eps}^2\|_{L^{q}(\bbR^2)} \\
 \leq & C\left(  \| |x| \nabla^2 \tilde{\eta}_{\eps, \alpha_{\eps}}\|_{L^{q}(B_{\eps \alpha_{\eps}}(0))} + \| \nabla \tilde{\eta}_{\eps, \alpha_{\eps}}\|_{L^{q}(\R^2)} \right) \\
 & \, +  \frac{C}{\log(\alpha_{\eps})}( (\alpha_{\eps} \eps)^{(2-q)/q} + \eps^{(2-q)/q}).
\end{align*}
 where as before we use that $ 1 - g_{\eps}^1 $ and $ g_{\eps}^2$  are appropriate rescaling of $ g $ to estimate the $ L^q$ norm of $ |x| \nabla^2 g_{\eps}^1 $ and $ |x|\nabla^2 g_{\eps}^2 $.

\end{proof}

\section*{Acknowledgements}

{\small M.B. is supported by the NWO grant OCENW.M20.194.}


\begin{thebibliography}{50}

\bibitem{BO2} Bella, P., Oschmann, F. (2022). Homogenization and low Mach number limit of compressible Navier-Stokes equations in critically perforated domains. Journal of Mathematical Fluid Mechanics, 24(3), 79.

\bibitem{BO} Bella, P.,  Oschmann, F. (2023). Inverse of divergence and homogenization of compressible Navier–Stokes equations in randomly perforated domains. Archive for Rational Mechanics and Analysis, 247(2), 14.

\bibitem{ION} Bravin, M.,  Ne\v{c}asov\'a, \v{S}. (2020). On the vanishing rigid body problem in a viscous compressible fluid. arXiv preprint arXiv:2011.05040

\bibitem{fei:lu}Feireisl, E.,  Lu, Y. (2015). Homogenization of stationary Navier–Stokes equations in domains with tiny holes. Journal of Mathematical Fluid Mechanics, 17(2), 381-392. 

\bibitem{FRZ} Feireisl, E., Roy, A., Zarnescu, A. (2022). On the motion of a small rigid body in a viscous compressible fluid. arXiv preprint arXiv:2208.07933.

\bibitem{He:2D} He, J., Iftimie, D. (2019). A small solid body with large density in a planar fluid is negligible. Journal of Dynamics and Differential Equations, 31(3), 1671-1688.

\bibitem{Lions1} Lions, P. L. (1996). Mathematical Topics in Fluid Mechanics: Volume 1: Incompressible Models (Vol. 1). Oxford University Press on Demand.

\bibitem{Lions2} Lions, P. L. (1996). Mathematical Topics in Fluid Mechanics: Volume 2: Compressible Models (Vol. 2). Oxford University Press on Demand.

\bibitem{lu:sch} Lu, Y.,  Schwarzacher, S. (2018). Homogenization of the compressible Navier–Stokes equations in domains with very tiny holes. Journal of Differential Equations, 265(4), 1371-1406.
 
 \bibitem{Mas} Masmoudi, N. (2002). Homogenization of the compressible Navier–Stokes equations in a porous medium. ESAIM: Control, Optimisation and Calculus of Variations, 8, 885-906.
 
 \bibitem{N:P} Ne\v{c}asov\'a, \v{S}., Pan, J. (2022). Homogenization problems for the compressible Navier–Stokes system in 2D perforated domains. Mathematical Methods in the Applied Sciences.
 
\bibitem{NS} Novotny, A., Straskraba, I. (2004). Introduction to the mathematical theory of compressible flow (Vol. 27). OUP Oxford.

\bibitem{P:S} Pokorn\'y, M.,  Sk\v{r}\'i\v{s}ovsk\'y, E. (2021). Homogenization of the evolutionary compressible Navier–Stokes–Fourier system in domains with tiny holes. Journal of Elliptic and Parabolic Equations, 7(2), 361-391.

\end{thebibliography}
\end{document}